\def\authors{George Lowther}
\def\runauthor{\authors}
\def\head{Fitting Martingales To Given Marginals}
\def\runhead{Martingale Marginals}
\def\keywords{Martingale, Strong Markov, Diffusion, Marginal distributions.}
\def\classifications{60J60, 60J25, 60G44, 60H10.}
\def\@evenhead{\thepage\hfill{\small\MakeUppercase{\runauthor}}\hfill}
\def\@oddhead{\hfill{\small\MakeUppercase{\runhead}}\hfill\thepage}
\def\blfootnote{\xdef\@thefnmark{}\@footnotetext}
\newcommand{\halfplane}{\reals_+\times\reals} 
\newcommand{\ohalfplane}{\reals_{>0}\times\reals} 
\newcommand{\reals}{\mathbb{R}} 
\newcommand{\E}[1]{\mathbb{E}\left[#1\right]} 
\newcommand{\setsF}{\mathcal{F}} 
\newcommand{\setsG}{\mathcal{G}} 
\newcommand{\pd}[1]{_{\!,_{#1}}}
\newcommand{\dlimint}[4]{\int_{#1}^{#2\!\!\!}\!\int_{#3}^{#4}} 
\newcommand{\dblint}{\int\!\!\!\int} 
\newcommand{\nat}{\mathbb{N}}
\newcommand{\Prob}[1]{\mathbb{P}\left(#1\right)} 
\newcommand{\salg}{$\sigma$-algebra} 
\newcommand{\cp}{{\rm CP}}
\newcommand{\PP}{\mathbb{P}}
\newcommand{\QQ}{\mathbb{Q}}
\newcommand{\acd}{ACD}
\newcommand{\support}[1]{{\rm Supp}(#1)}
\newcommand{\msupport}[1]{{\rm MSupp}(#1)}
\newcommand{\EP}[2]{\mathbb{E}_{#1}\left[#2\right]}   
\newcommand{\D}{{\rm\bf D}}
\newcommand{\measures}[1]{{{\mathcal M}(#1)}}
\newcommand{\nicefunc}{\mathcal{D}}
\newcommand{\nicefuncK}{\mathcal{D}_{\rm K}}
\newcommand{\cadlag}{c\`adl\`ag}
\newcommand{\zcqv}{{z.c.q.v.}}
\newcommand{\Dom}{\mathcal D}
\newtheorem{definition}{Definition}[section]
\newtheorem{theorem}[definition]{Theorem}
\newtheorem{lemma}[definition]{Lemma}
\newtheorem{corollary}[definition]{Corollary}
\numberwithin{equation}{section}
\begin{document}

\title{\head}
\blfootnote{\emph{Key Words}: \keywords}
\blfootnote{\emph{AMS 2000 Classification}: \classifications}
\author{\authors}
\date{}
\maketitle
\thispagestyle{empty}

\begin{abstract}
We consider the problem of finding a real valued martingale fitting specified marginal distributions. For this to be possible, the marginals must be increasing in the convex order and have constant mean. We show that, under the extra condition that they are weakly continuous, the marginals can always be fitted in a unique way by a martingale which lies in a particular class of strong Markov processes.

It is also shown that the map that this gives from the sets of marginal distributions to the martingale measures is continuous. 
Furthermore, we prove that it is the unique continuous method of fitting martingale measures to the marginal distributions.
\end{abstract}

\maketitle

\section{Introduction}

We consider the problem of finding real valued martingales fitting given marginal distributions and show that, by restricting to a certain class of strong Markov processes, it can be done in a unique way.  It is furthermore shown that this is the unique continuous method of matching any specified marginals by martingales.

The existence of martingales with specified marginals has been previously studied by many authors. In particular, Strassen \citep{Strassen} showed in 1965 that if $(\mu_n)_{n\in\nat}$ is a sequence of probability measures on the real numbers which have constant mean and are increasing in the convex order, then there is a martingale $(X_n)_{n\in\nat}$ such that the law of $X_n$ is $\mu_n$.
The property that $\mu_n$ is increasing in the convex order simply means that $\mu_n(f)$ is increasing in $n$ for every increasing convex function $f$, and the necessity of this condition follows easily from Jensen's inequality. This result was extended by Kellerer \citep{Kellerer} in 1972 to the case where the marginal distributions $\mu_t$ and the martingale $X_t$ are indexed by time $t$ in $\reals_+$. It was also shown that $X$ can always be chosen to be Markov.

More recently, this problem has been investigated in the context of pricing financial derivatives, where knowledge of the prices of vanilla call and put options provides an implied distribution for the underlying asset price at future times.
For example, assuming zero interest rates (for simplicity) the local volatility model constructs the asset price process as a solution to the stochastic differential equation
\begin{equation}\label{eqn:LV SDE}
dS_t=S_t\sigma(t,S_t)\,dB_t,
\end{equation}
where $B$ is a Brownian motion, $S$ is the asset price and $\sigma(t,x)$ is the local volatility. Then, as is well known (see \citep{Derman} and \citep{Dupire2}), if $C(t,x)$ is the price of a vanilla call with strike price $x$ and maturity $t$, the implied probability density of $S_t$ is $\partial^2C/\partial x^2$ and the local volatilities can be recovered from the following forward equation
\begin{equation*}
\frac{\partial}{\partial t} C(t,x) = \frac{1}{2} x^2\sigma^2(t,x)\frac{\partial^2}{\partial x^2}C(t,x).
\end{equation*}
Alternative methods of matching the implied marginal distributions have been considered, such as jump-diffusions in \citep{Andersen}, stochastic volatility in \citep{ImpliedPrice} and models based on L\'evy processes in \citep{LevyModels}. Also, \citep{Yor} gives several constructions, including Skorokhod embedding and time-changed Brownian motion methods.

In this paper, we provide a general way of matching marginal distributions under very mild constraints. Other than the necessary conditions of having constant mean and being increasing in the convex order, the only further constraint placed on the marginals is that they be weakly continuous. That is, if $t_n\rightarrow t$ then $\mu_{t_n}(f)\rightarrow\mu_t(f)$ for every continuous and bounded function $f$. It is shown that such marginals can be fitted in a unique way by a certain class of strong Markov martingales. As this class includes all martingale diffusions, the solution will coincide with the local volatility model when it applies. However, for marginals which are either not smooth or don't have strictly positive densities, different types of solutions are obtained which cannot be described by an S.D.E. such as (\ref{eqn:LV SDE}). For example, jump processes and singular diffusions, as described in Section \ref{sec:examples}.

We also show that that the resulting map from the sets of marginals to the martingales is continuous. So, a small change to marginal distributions results in only a small change to the martingale measure matching these marginals.

Furthermore, it is shown in Theorem \ref{thm:acd martingale method is unique cts} that not only is our method of fitting the marginals continuous, but it is the only possible continuous method. Consequently, any alternative approach (e.g., those described by \citep{Andersen}, \citep{ImpliedPrice} and \citep{LevyModels}) must either fail to fit, or very closely approximate, certain marginal distributions, or small changes in the marginals would lead to big changes in the resulting martingale measure.

Let us now define the types of processes to be considered, which should include all continuous and strong Markov processes. However, there are some marginal distributions which cannot be matched by any continuous process. For example, if $\Prob{0<X_t<1}=0$ for all times $t$ and $\Prob{X_t\le 0}$ decreases in $t$, then there must be a positive probability that $X$ jumps from below $0$ to above $1$.
For this reason, we relax the continuity condition to obtain the following class of processes.

\begin{definition}\label{defn:acd}
Let $X$ be a real valued stochastic process. Then,
\begin{enumerate}
\item $X$ is \emph{strong Markov} if for every bounded, measurable $g:\reals\rightarrow\reals$ and every $t\in\reals_+$ there exists a measurable $f:\halfplane\rightarrow\reals$ such that
\begin{equation*}
f(\tau,X_\tau) = \E{g(X_{\tau+t})\mid \setsF_\tau}
\end{equation*}
for every finite stopping time $\tau$.
\item\label{defn:ac} $X$ is \emph{almost-continuous} if it is \cadlag, continuous in probability and given any two independent \cadlag\ processes $Y,Z$ each with the same distribution as $X$ and for every $s<t\in\reals_+$ we have
\begin{equation*}
\Prob{Y_s<Z_s,\, Y_t>Z_t{\rm\ and\ }Y_u\not= Z_u\textrm{ for every }u\in(s,t)}=0.
\end{equation*}
\item $X$ is an \emph{almost-continuous diffusion} if it is strong Markov and almost-continuous.
\end{enumerate}
\end{definition}
In \citep{Lowther2} it was shown that almost-continuous diffusions arise when taking limits of continuous diffusions in the sense of finite-dimensional distributions. Note that condition \ref{defn:ac} is equivalent to saying that $Y-Z$ cannot change sign without passing through zero, which is clearly true for continuous processes by the intermediate value theorem. In what follows, we often abbreviate `almost-continuous diffusion' to \acd.

An alternative way of representing the marginal distributions $\mu_t$ which we make use of is through the function $C(t,x)=\int(y-x)_+\,d\mu_t(y)$. The property that $\mu_t$ is increasing in the convex order is then equivalent to $C(t,x)$ being an increasing function of $t$. Furthermore, the distribution functions are easily recovered from $\mu_t((-\infty,x])=1+C\pd{2}(t,x+)$. This leads to the following space of functions.

\begin{definition} Let $\cp$ be the set of functions $C:\halfplane\rightarrow\reals$ such that
\begin{enumerate}
\item $C(t,x)$ is convex in $x$ and continuous and increasing in $t$.
\item $C(t,x)\rightarrow 0$ as $x\rightarrow\infty$, for every $t\in\reals_+$.
\item There exists a real number $a$ such that $C(t,x) + x \rightarrow a$ as $x\rightarrow-\infty$ for every $t\in\reals_+$.
\end{enumerate} 
\end{definition} 

Continuity of $C(t,x)$ in $t$ is just requiring the marginals $\mu_t$ to be weakly continuous in $t$, and the third condition is equivalent to them having a constant mean. The property that a process $X$ has marginals consistent with some $C\in\cp$ can be expressed as
\begin{equation}\label{eqn:unique measure fitting marginals}
C(t,x)=\E{(X_t-x)_+}
\end{equation}
 and, conversely, if $X$ is a martingale which is continuous in probability then $C$ given by (\ref{eqn:unique measure fitting marginals}) will be in the space $\cp$. The notation used here is borrowed from the financial interpretation where $C(t,x)$ are call prices, with maturity $t$ and strike $x$, although we just make use of $C\in\cp$ as convenient representations of martingale marginals both in the statements of the main results below and in the proofs later.

We use the space of \cadlag\ real valued processes (Skorokhod space) with coordinate process $X$ on which to represent martingale measures.
\begin{equation*}\begin{split}
&\D=\left\{\textrm{\cadlag\ functions }\omega:\reals_+\rightarrow\reals\right\},\\
&X:\reals_+\times\D\rightarrow\reals,\ (t,\omega)\mapsto X_t(\omega)\equiv \omega(t),\\
&\setsF=\sigma\left(X_t:t\in\reals_+\right),\\
&\setsF_t=\sigma\left(X_s:s\in[0,t]\right).
\end{split}\end{equation*}
Then, $(\D,\setsF)$ is a measurable space and $X$ is a \cadlag\ process adapted to the filtration $\setsF_t$.
The existence and uniqueness of the martingale measure fitting given marginals is now stated,

\begin{theorem}\label{thm:unique measure fitting marginals}
For any $C\in\cp$ there exists a unique measure $\PP$ on $(\D,\setsF)$ under which $X$ is an \acd\ martingale and (\ref{eqn:unique measure fitting marginals}) is satisfied.
\end{theorem}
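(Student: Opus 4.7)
My plan is to decompose the proof into (existence) construction of an \acd\ martingale with call function $C$ and (uniqueness) the rigidity of such a construction. For existence I would first approximate $C$ by a sequence $C^n\in\cp$ that are smooth in $(t,x)$ with $C^n\pd{22}>0$ everywhere; this can be done by convolving with a jointly smooth mollifier while preserving convexity in $x$, monotonicity in $t$ and the constant mean condition. For each $C^n$ the local volatility SDE $dS_t = \sigma^n(t,S_t)\,dB_t$ with $\sigma_n^2 = 2C^n\pd{1}/C^n\pd{22}$ admits a weakly unique solution that is a continuous martingale diffusion whose marginals recover $C^n$, and I would take $\PP^n$ to be its law on $(\D,\setsF)$.

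Next I would extract a weak limit $\PP$ of the $\PP^n$ on Skorokhod space. Tightness follows from a uniform bound on $\E{|X^n_t|}$ (coming from the convex-order and constant-mean properties of the $C^n$), Doob's maximal inequality and Aldous' criterion to control oscillations of martingales. The martingale property and the marginal identity (\ref{eqn:unique measure fitting marginals}) pass to the limit using $C^n(t,x)\to C(t,x)$ and uniform integrability of the sequence $(X^n_t-x)_+$. To see that the limit is an \acd, I would appeal to the closure result of \citep{Lowther2}, under which limits (in finite-dimensional distributions) of continuous strong Markov martingales are \acd s; this bypasses the fact that the Markov property is not in general preserved under weak limits and is essentially the reason that the class \acd\ was singled out in Definition \ref{defn:acd}.

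For uniqueness, suppose $\PP,\PP'$ are two \acd\ martingale measures on $(\D,\setsF)$ both realising $C$. Since both processes are strong Markov it suffices to show that their transition kernels $P_{s,t}(x,\cdot)$ and $P'_{s,t}(x,\cdot)$ agree for $\mu_s$-almost every $x$ and all $s<t$. I would do this by realising $X\sim\PP$ and $X'\sim\PP'$ on a common product space together with independent copies $Y,Y'$ of each, and applying the non-crossing condition \ref{defn:acd}(\ref{defn:ac}) to the independent pairs $(X,Y')$ and $(X',Y)$. Because $X$ and $X'$ share the common one-dimensional marginal $\mu_u$ at every $u$, the non-crossing property on these mixed pairs forces a pathwise ordering at the level of the kernels that is compatible only with a single martingale coupling of $\mu_s$ and $\mu_t$. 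Combining this across the strong Markov structure forces $\PP=\PP'$.

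The main obstacle is this uniqueness step: the existence argument proceeds along the classical lines of smoothing and weak limits, but the uniqueness needs to extract from the seemingly mild non-crossing condition enough rigidity to pin down the transition kernels of a possibly very irregular process (with jumps and singular diffusive components allowed). I expect the argument will require auxiliary lemmas, probably already assembled in \citep{Lowther2}, to the effect that for any \acd\ martingale the kernel $P_{s,t}(x,\cdot)$ is the unique martingale kernel from $\delta_x$ to some measure $\nu_{s,t,x}$ determined by disintegrating $\mu_t$ along the non-crossing ordering of $\mu_s$, and this disintegration is canonically fixed by $C$ alone.
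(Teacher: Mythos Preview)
Your existence outline is a legitimate alternative to the paper's construction, and the paper even mentions it in passing: instead of smoothing $C$ and solving the local volatility SDE, the paper builds an \acd\ martingale matching $C$ at any finite set of times by a Skorokhod embedding (the Azema--Yor/Hobson construction in Lemma \ref{lemma:2 marginals} and Corollary \ref{cor:dense acd}), and then passes to a limit in finite-dimensional distributions via tightness on $\reals^S$ for a countable dense $S\subset\reals_+$ (Lemma \ref{lemma:lim of mgales on S}). Both routes ultimately invoke the closure result from \citep{Lowther2} to conclude that the limit is an \acd. Your SDE route would need some care --- mollifying $C$ so that it stays in $\cp$ (in particular the boundary behaviour as $x\to\pm\infty$ and the constant-mean condition), and ensuring that $\sigma_n^2=2C^n\pd{1}/C^n\pd{22}$ is regular enough for the SDE to have a strong Markov solution --- but these are technical rather than conceptual obstacles.

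The uniqueness argument, however, has a real gap. The almost-continuity condition in Definition \ref{defn:acd}(\ref{defn:ac}) is a property of a \emph{single} law: it says that two independent copies \emph{with the same distribution as $X$} cannot cross without meeting. It gives you no information about an independent pair $(X,Y')$ with $X\sim\PP$ and $Y'\sim\PP'$ when $\PP\neq\PP'$, so you cannot ``apply the non-crossing condition to the mixed pairs $(X,Y')$ and $(X',Y)$'' as you propose. The hoped-for disintegration of $\mu_t$ along a ``non-crossing ordering'' of $\mu_s$ that would pin down the kernel $P_{s,t}(x,\cdot)$ from $C$ alone is not supplied by \citep{Lowther2}, and there is no obvious way to extract it from the definition.

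The paper's uniqueness proof is completely different and analytic. It uses the generalized backward equation developed in \citep{Lowther4}: for a convex Lipschitz $g$ and fixed $t$, one constructs $f\in\nicefunc$ with $f(s,X_s)=\EP{\PP}{g(X_t)\mid\setsF_s}$ and $f\pd{22}^-=0$ off the marginal support (Lemma \ref{lemma:cond exp is conv fun}); the martingale property of $f(s,X_s)$ is then encoded purely in terms of $C$ by the condition $\mu_{[f,C]}=0$ (Theorem \ref{thm:mart cond}). Since this condition depends only on $C$, the same $f$ makes $f(s,X_s)$ a martingale under any other \acd\ measure $\QQ$ with the same $C$, forcing the conditional expectations --- hence the two-time distributions, hence by the Markov property the full laws --- to coincide. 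The substantial work lies in proving Theorem \ref{thm:mart cond}, which requires the Dirichlet-process machinery, quasi-left-continuity of \acd\ martingales (Lemma \ref{lemma:qlc}), and a time-reversed variation argument (Lemma \ref{lemma:bdd rev cond var gives int var}). None of this is recoverable from a direct coupling or ordering argument of the kind you sketch.
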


See sections \ref{sec:existence} and \ref{sec:uniqueness} for the proof of this result, which involves a weak compactness argument to construct the measure and applies a result from \citep{Lowther2} concerning limits of almost-continuous diffusions. Then, a backward equation developed in \citep{Lowther4} is applied to show uniqueness.

Given any $C\in\cp$, the notation $\PP_C$ will be used for the unique \acd\ martingale measure matching the marginal distributions given by $C$. This defines a map $C\mapsto\PP_C$, which we shall  show is continuous under the appropriate topologies.

Let $\measures{\D}$ be the set of probability measures on $(\D,\setsF)$. A sequence $(\PP_n)_{n\in\nat}$ in $\measures{\D}$ converges to $\PP$ in the sense of finite-dimensional distributions if and only if
$\EP{\PP_n}{Z}\rightarrow\EP{\PP}{Z}$
for every random variable $Z$ of the form
\begin{equation}\label{eqn:rv Z for finite dim}
Z=f(X_{t_1},\ldots,X_{t_m})
\end{equation}
for $t_1,\ldots,t_m\in\reals_+$ and continuous bounded $f:\reals^m\rightarrow\reals$.

We use the topology of pointwise convergence on $\cp$, so $C_n\rightarrow C$ if and only if $C_n(t,x)\rightarrow C(t,x)$ for all $(t,x)\in\halfplane$. Note that this is slightly stronger than weak convergence of the marginal distributions, which would be equivalent to convergence of $C_n(t,x)-C_n(t,y)$ to $C(t,x)-C(t,y)$.
The continuity result for the map from the marginals to the martingale measures is as follows.

\begin{theorem}\label{thm:marginals to martingales is cts}
For every $C\in\cp$ denote the unique \acd\ martingale measure given by Theorem \ref{thm:unique measure fitting marginals} by $\PP_C$.
Then, the function
\begin{equation*}
\cp\rightarrow\measures{\D},\ C\mapsto\PP_C
\end{equation*}
is continuous, under pointwise convergence on $\cp$ and convergence in the sense of finite-dimensional distributions on $\measures{\D}$.
\end{theorem}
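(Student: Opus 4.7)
The plan is to argue by relative compactness and uniqueness. Fix a sequence $C_n\to C$ pointwise in $\cp$ and write $\PP_n:=\PP_{C_n}$. I will show that every subsequence of $(\PP_n)$ has a further subsequence converging to $\PP_C$ in the finite-dimensional distribution sense, which (applied test function by test function) yields the full convergence $\PP_n\to\PP_C$.

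First, for each fixed $t$, pointwise convergence $C_n(t,\cdot)\to C(t,\cdot)$ gives convergence of the marginal distribution functions $\mu_{t,n}((-\infty,x])=1+C_n\pd{2}(t,x+)$ at continuity points of the limit, hence $\mu_{t,n}\to\mu_t$ weakly; in particular $\{\mu_{t,n}:n\in\nat\}$ is tight. Tightness of each joint law $(X_{t_1},\ldots,X_{t_m})$ under $\PP_n$ is then automatic, so a diagonal extraction over a countable dense set of times produces a subsequence $(\PP_{n_k})$ along which $(X_{t_1},\ldots,X_{t_m})$ converges in law for every finite collection of times (extension to arbitrary times uses the continuity in $t$ of $C$ together with continuity in probability; any limit $\QQ$ will inherit continuity in probability from the uniform-in-$n$ estimate $\EP{\PP_n}{|X_t-X_s|\wedge 1}$ bounded by a quantity controlled by $C_n(s,\cdot)$ and $C_n(t,\cdot)$).

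Next, I identify the subsequential limit $\QQ$. The convergence of $C_n(t,x)=\EP{\PP_n}{(X_t-x)_+}$ to $C(t,x)$, together with the uniform $L^1$ control on $X_t$ coming from the third property in the definition of $\cp$ (which pins down a common asymptotic mean $a$), shows that $\{X_t\}$ is uniformly integrable under $(\PP_n)$. Hence $\EP{\QQ}{(X_t-x)_+}=C(t,x)$ and, by a bounded-continuous test function $h(X_{t_1},\dots,X_{t_{m-1}})$ together with this uniform integrability, the martingale identity $\EP{\PP_n}{X_t\,h}=\EP{\PP_n}{X_s\,h}$ passes to the limit. Thus $X$ is a $\QQ$-martingale with marginals specified by $C$.

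The main obstacle — and the place where this theorem cannot be proved by soft arguments alone — is to show that $\QQ$ is itself an \acd{} martingale, not merely a Markov or continuous-in-probability martingale. For this I invoke the result from \citep{Lowther2} cited in the text, which states that the class of almost-continuous diffusions is closed under limits in the sense of finite-dimensional distributions; combined with the observation that $X$ is a $\QQ$-martingale continuous in probability with marginals prescribed by $C\in\cp$, this places $\QQ$ in the hypothesis of Theorem \ref{thm:unique measure fitting marginals}. Uniqueness then forces $\QQ=\PP_C$. Since every subsequence of $(\PP_n)$ has a further subsequence with the same limit $\PP_C$, for every finite-dimensional test functional $Z$ as in (\ref{eqn:rv Z for finite dim}) we have $\EP{\PP_n}{Z}\to\EP{\PP_C}{Z}$, completing the proof.
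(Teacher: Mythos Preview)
Your proposal is correct and follows essentially the same route as the paper: relative compactness of the sequence $(\PP_{C_n})$, identification of any subsequential limit as an \acd\ martingale with marginals $C$ via the closure result from \citep{Lowther2}, and then the uniqueness part of Theorem \ref{thm:unique measure fitting marginals} to force the limit to be $\PP_C$. The paper simply packages the tightness, subsequence extraction, martingale identification, and invocation of \citep{Lowther2} into Lemma \ref{lemma:acd conv to acd} (which is also used for the existence argument), and phrases the conclusion as a proof by contradiction rather than via the ``every subsequence has a further convergent subsequence'' principle; these are cosmetic differences.
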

So, given any sequence $C_n\in\cp$ converging pointwise to $C\in\cp$ then
$\EP{\PP_{C_n}}{Z}\rightarrow\EP{\PP_C}{Z}$
for every random variable $Z$ of the form (\ref{eqn:rv Z for finite dim}). The proof of this is left until Section \ref{sec:uniqueness}.

Not only is the \acd\ martingale measure fitting the marginal distributions uniquely defined, but it is also the \emph{only} way of fitting the marginals in a continuous way, as the following result states. Here, we again use the topology of pointwise convergence on $\cp$ and convergence in the sense of finite-dimensional distributions on $\measures{\D}$.

\begin{theorem}\label{thm:acd martingale method is unique cts}
Suppose that we have a continuous map from a dense subset $S$ of $\cp$ to the martingale measures
\begin{equation*}
S\rightarrow\measures{\D},\ C\mapsto\mathbb{Q}_C
\end{equation*}
such that for every $C\in S$ the equality $\EP{\QQ_C}{(X_t-x)_+}=C(t,x)$ is satisfied.
Then $\QQ_C=\PP_C$.
\end{theorem}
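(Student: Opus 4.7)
The approach is to use a density-and-continuity argument: since both $\PP$ and $\QQ$ are continuous maps into $\measures{\D}$ that agree on one-dimensional marginal distributions, it suffices to exhibit a dense subclass of $\cp$ on which the marginal-matching martingale measure is uniquely determined, and then transfer this agreement via continuity. Theorem \ref{thm:marginals to martingales is cts} provides continuity of $C\mapsto\PP_C$ on all of $\cp$, while the codomain $\measures{\D}$ (under finite-dimensional convergence) is Hausdorff, since distinct measures on $(\D,\setsF)$ are separated by finite-dimensional distributions.

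The first step is to identify a dense subclass $U\subseteq\cp$ for which the condition $\EP{\QQ}{(X_t-x)_+}=C(t,x)$ uniquely determines the martingale measure (necessarily equal to $\PP_C$). A natural candidate for $U$ consists of call-price functions coming from marginals with thin, monotonically evolving support --- for instance two-point supports $\{a(t),b(t)\}$ with $a$ strictly decreasing and $b$ strictly increasing, for which any martingale matching these marginals is forced to take the form $X_t=\phi(t,\sigma)$ for a single Bernoulli random variable $\sigma$. Density of $U$ in $\cp$ under pointwise convergence of $C$ follows from the fact that any measure on $\reals$ can be weakly approximated by finitely supported measures, and weak convergence of marginals forces pointwise convergence of the call functions $C(t,x)$.

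For every $C\in U\cap S$ the uniqueness of the marginal-matching martingale gives $\QQ_C=\PP_C$. For an arbitrary $C\in S$ one then picks a sequence $C_n\in U\cap S$ with $C_n\to C$ pointwise and invokes the assumed continuity of $\QQ$ on $S$ together with Theorem \ref{thm:marginals to martingales is cts} to conclude $\QQ_C=\lim_n\QQ_{C_n}=\lim_n\PP_{C_n}=\PP_C$ in the sense of finite-dimensional distributions, hence as measures on $(\D,\setsF)$.

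The main obstacle is securing the density of $U\cap S$ in $S$: since $U$ and $S$ are both merely dense in $\cp$, their intersection need not be dense a priori. The resolution is either to build $U$ with stronger topological properties --- e.g.\ containing a dense open or $G_\delta$ subset of $\cp$, so that $U\cap S$ is automatically dense whenever $S$ is --- or, more robustly, to extend $\QQ$ continuously from $S$ to all of $\cp$ by a tightness argument: for $C_n\in S$ with $C_n\to C$, the measures $\QQ_{C_n}$ are tight (their marginals converge to those of $C$) and any subsequential limit in $\measures{\D}$ is a martingale measure with marginals $C$; on the dense set $U$ this limit is forced by uniqueness to equal $\PP_C$, so the continuous extension $\widetilde\QQ$ coincides with $\PP$ on $U$ and hence, by continuity of both on $\cp$, on all of $\cp$. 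In particular $\QQ_C=\widetilde\QQ_C=\PP_C$ for every $C\in S$.
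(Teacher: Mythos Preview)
Your high-level architecture matches the paper's: find a dense class $U\subseteq\cp$ on which the marginals uniquely determine the martingale measure, then use compactness of the $\QQ_{C_n}$ together with continuity of both maps to force $\QQ_C=\PP_C$. Your resolution (b) of the ``$U\cap S$ need not be dense'' obstacle is exactly what the paper does: approximate an element of $U$ by a sequence in $S$, apply the tightness result (Lemma~\ref{lemma:lim of mgales on S}) to get a subsequential limit that is a martingale measure with the right marginals, and invoke uniqueness on $U$ to identify that limit as $\PP$; then diagonalise.

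The genuine gap is your construction of $U$. Two-point-supported marginals are \emph{not} dense in $\cp$: the call function $x\mapsto C(t,x)$ of a two-point measure is piecewise linear with at most two kinks, and a pointwise limit of such functions again has at most two kinks, so you cannot approximate any $C$ whose $C(0,\cdot)$ is, say, smooth and strictly convex. Your density justification (``any measure can be weakly approximated by finitely supported measures'') does not match the $U$ you actually defined, and in any case is incomplete even for finitely-supported marginals: approximating each $\mu_t$ separately by a finitely supported measure does not automatically preserve the convex-ordering constraint across $t$, so the resulting family need not lie in $\cp$. And if you enlarge $U$ to all finitely supported families in $\cp$, you lose the easy uniqueness argument --- with three or more atoms the conditional law of $X_t$ given $X_s$ is no longer pinned down by its mean, and there is no evident reason the matching martingale should be unique.

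The paper's choice of $U$ is the set of \emph{extremal} elements of $\cp$ (extremal among those with the same time-$0$ marginal). Uniqueness then comes from a short convexity argument (Lemma~\ref{lemma:extremal give unique martingale}): any two distinct matching martingale measures would yield a non-trivial convex decomposition of $C$. Density is established constructively: for $t_0<t_1$ an explicit extremal $\tilde C$ agreeing with $C$ at $t_0,t_1$ is built by a tangent-line interpolation between $C(t_0,\cdot)$ and $C(t_1,\cdot)$ (equation~\eqref{eqn:def of extremal C 2}, Lemma~\ref{lemma:C is extremal}), and these are concatenated over the grid $\{k/n\}$ (Corollary~\ref{cor:extremal exists at sequence}). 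With this $U$ in hand, the remainder of your argument goes through exactly as in the paper.
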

In particular this shows that the choice of the class of almost-continuous diffusions used to fit the marginals is not arbitrary, but was in fact forced upon us.
The proof of Theorem \ref{thm:acd martingale method is unique cts} is left until Section \ref{sec:extremal}, where the idea is that there are certain marginal distributions for which there is only one possible martingale measure. These correspond to extremal elements of $\cp$, and form a dense subset.

We finally note that the fact that $C(t,x)$ is continuous and monotonic in both $t$ and $x$ for every $C\in\cp$ implies that pointwise convergence is the same as locally uniform convergence, and the topology is given by the metric
\begin{equation}\label{eqn:cp metric}
d(C_1,C_2) = \sup\left\{|C_1(t,x)-C_2(t,x)|\wedge 2^{-|x|-t}:(t,x)\in\halfplane\right\}.
\end{equation}
So, we are justified in only considering limits of sequences (rather than generalized sequences) in the explanations and proofs of theorems \ref{thm:marginals to martingales is cts} and \ref{thm:acd martingale method is unique cts}.

\section{Examples}
\label{sec:examples}

In this section we mention some examples to demonstrate the kinds of processes which can result from different properties of the marginal distributions.

\subsection{Continuous diffusions}
\label{sec:examples:cts diff}

If $C(t,x)$ is strictly convex in $x$ for every $t>0$, then the support of $X_t$ under the measure given by Theorem \ref{thm:unique measure fitting marginals} will be all of $\reals$ and, consequently, $X$ will be a continuous process (see \citep{Lowther2} Lemma 1.4).
If, furthermore, $C$ is twice continuously differentiable then it can be shown that $X$ is a solution to the stochastic differential equation
\begin{equation}\label{eq:SDE example}
dX_t=\sigma(t,X_t)\,dB_t,
\end{equation}
for a Brownian motion $B$ and with $\sigma$ given by the forward equation
\begin{equation}\label{eq:fwd PDE example}
\frac{\partial}{\partial t}C(t,x)=\frac{1}{2}\sigma(t,x)^2\frac{\partial^2}{\partial x^2}C(t,x).
\end{equation}
Then, if $\sigma(t,x)$ is H\"older continuous of order $1/2$, the Yamada-Watanabe theorem (\citep{Rogers} V, Theorem 40.1) says that (\ref{eq:SDE example}) uniquely determines the law of $X$. This is the familiar situation covered by the local volatility model, and widely employed in finance (see \citep{Dupire2}).

\subsection{Jump processes}

Now suppose that the supports of the marginal distributions are contained in the set of integers. Then, the process $X$ must be an integer valued pure jump process.
Suppose furthermore that $\Prob{X_t=n}>0$ for every $t>0$ and integer $n$. Then, the almost-continuous property says that $X$ cannot jump past any integer values, so can only jump between successive integers. Therefore, $X$ must be a piecewise constant process with jump sizes $\pm 1$. For example, it could be a symmetric Poisson process (i.e., the difference of two standard Poisson processes).

More generally, jump processes can arise whenever the supports of the marginal distributions are not connected intervals. Consider, for example, a smooth $C(t,x)$ in $\cp$ which is strictly convex in $x$, so that the conditions considered in Section \ref{sec:examples:cts diff} are satisfied. Then define $\tilde C$ by
\begin{equation*}
\tilde C(t,x) = \left\{
\begin{array}{ll}
C(t,x),&\textrm{if }x\ge 1\textrm{ or }x\le 0,\\
xC(t,1)+(1-x)C(t,0),&\textrm{if }0<x<1.
\end{array}
\right.
\end{equation*}
The corresponding marginal distributions then assign zero probability to the interval $(0,1)$. Under the resulting martingale measure $\PP_{\tilde C}$, the process $X$ will behave like a continuous diffusion satisfying the SDE (\ref{eq:SDE example}) whenever $X>1$ or $X<0$. However, the points $\{0,1\}$ will act like reflecting barriers, compensated by $X$ sometimes jumping across the interval $(0,1)$.

\subsection{Singular diffusions}

Now suppose that $C(t,x)$ is strictly convex in $x$ for every $t>0$ so that, as in section \ref{sec:examples:cts diff}, we can conclude that $X$ is continuous under the associated \acd\ martingale measure.
If, however, $C(t,x)$ is not twice differentiable in $x$ then the marginal distributions will not be continuous with respect to the Lebesgue measure, and $X$ will not satisfy a stochastic differential equation such as (\ref{eq:SDE example}).

For example, suppose that $C\in\cp$ satisfies all of the properties considered in section \ref{sec:examples:cts diff} and define
\begin{equation*}
\tilde C(t,x) = \frac{1}{2}C(t,x) + \frac{1}{2}\max(-x,0).
\end{equation*}
Note that $\tilde C(t,x)$ is not differentiable at $x=0$ and the corresponding marginal distributions have an atom at $0$. Under the \acd\ martingale measure $\PP_{\tilde C}$, the process $X$ will behave like a continuous diffusion satisfying (\ref{eq:SDE example}) away from $0$. However, $\PP_{\tilde C}(X_t=0)=1/2$, so $X$ is sticky at $0$, spending a positive time there. If, furthermore, $C(t,x)$ is strictly increasing in $t$ then $X$ will not be constant over any time intervals.

Similarly, it is not difficult to construct marginal distributions so that $X_t$ is rational with probability $1$ and with support equal to $\reals$, resulting in continuous processes spending almost all their time in the rational numbers. This is the case with the Feller-McKean diffusion (\citep{Rogers1} III.23), and similar situations can arise as a limit of random walks with randomly generated rates (see \citep{Fontes}).

\section{Existence}
\label{sec:existence}

We show how \acd\ martingales can be constructed with specified marginal distributions by taking limits of processes which match the marginals at finite sets of times. A weak compactness argument is used to prove existence of the limit.

For any set $S\subseteq\reals_+$, let $\reals^S$ consist of the real valued functions on $S$. We consider $\reals^S$ as a topological space using the topology of pointwise convergence, and denote its Borel \salg\ by $\setsF^S$.
The weak topology on the probability measures on $(\reals^S,\setsF^S)$ is the topology generated by the maps $\PP\mapsto\EP{\PP}{f}$ for all real valued continuous and bounded functions $f$ on $\reals^S$.
We denote the coordinate process on $\reals^S$ by $X^S_t$,
\begin{equation*}
X^S:S\times\reals^S\rightarrow\reals,\ (t,\omega)\mapsto X^S_t(\omega)\equiv \omega(t),
\end{equation*}
which has natural filtration $(\setsF^S_t)_{t\in\reals_+}$ given by,
\begin{equation*}
\setsF^S_t=\sigma\left(X^S_s:s\in S,\,s\le t\right).
\end{equation*}
Then, for any measure $\PP$ on $(\D,\setsF)$ we use $\PP^S$ to denote the measure on $(\reals^S,\setsF^S)$ obtained from the law of $X_t$ under $\PP$ with $t$ restricted to $S$.

In particular, if $S$ is countable then $\reals^S$ is a Polish space, as it has a countable dense subset consisting of those $\omega$ such that $\omega(t)$ is rational for all $t\in S$ and zero for all but finitely many $t$, and the topology is given by a complete metric
\begin{equation*}
d(\omega,\omega^\prime)=\sum_n 2^{-n}\min(|\omega(s_n)-\omega^\prime(s_n)|,1),
\end{equation*}
where $S=\{s_1,s_2,\ldots\}$.

Furthermore, a sequence of probability measures $\PP_n$ on $(\D,\setsF)$ converges to $\PP$ in the sense of finite-dimensional distributions if and only if $\PP_n^S\rightarrow\PP^S$ weakly for every finite subset $S$ of $\reals_+$.

We now prove the result that we need in order to be able to find limits of sequences of martingale measures.
The idea here is to use weak compactness in order to pass to convergent subsequences.

A set $P$ of probability measures on a Polish space is said to be tight if for every $\epsilon>0$ there exists a compact set $C$ with $\Prob{C}<\epsilon$ for all $\PP\in P$, and $P$ is then weakly compact.
In particular, for any tight sequence of probability measures $\PP_n$, there is a probability measure $\PP$ and subsequence $\PP_{n_k}$ converging weakly to $\PP$ (see \citep{HeWangYan} Theorem 15.39.)
This allows us to find martingale measures with specified marginals as limits of sequences.

\begin{lemma}\label{lemma:lim of mgales on S}
Let $C\in\cp$ and $(\PP_n)_{n\in\nat}$ be a sequence of martingale measures on $(\D,\setsF)$ such that $\EP{\PP_n}{(X_t-x)_+}\rightarrow C(t,x)$.

Then, there exists a subsequence $\PP_{n_k}$ and a martingale measure $\PP$ on $(\D,\setsF)$ such that $\PP_{n_k}\rightarrow\PP$ in the sense of finite-dimensional distributions. Furthermore, $X$ is a martingale under $\PP$, continuous in probability and satisfies $\EP{\PP}{(X_t-x)_+}=C(t,x)$.
\end{lemma}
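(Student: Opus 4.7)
The plan is a standard tightness-plus-diagonal-extraction argument on a countable dense set of times, followed by Doob's martingale regularization to obtain \cadlag\ paths, and a final uniqueness argument to upgrade to general finite-dimensional convergence. First, for each fixed $t$, the pointwise convergence $\EP{\PP_n}{(X_t-x)_+}\to C(t,x)$ together with the asymptotic $C(t,x)+x\to a$ as $x\to-\infty$ from the third axiom of $\cp$ delivers convergence of the first absolute moments $\EP{\PP_n}{|X_t|}$ and weak convergence $\mu^n_t\to\mu_t$, where $\mu_t$ is the unique probability measure whose call function is $C(t,\cdot)$. Together these yield uniform integrability of $\{X_t\}_n$ under the $\PP_n$ and, by Tychonoff, tightness of $\PP_n^F$ on $\reals^F$ for every finite $F\subset\reals_+$.

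Fix $S=\rats_+$ and, by a diagonal argument over the countably many finite subsets of $S$, pass to a subsequence $\PP_{n_k}$ such that $\PP_{n_k}^F$ converges weakly for every finite $F\subset S$. The consistent family of limits assembles via Kolmogorov's extension theorem into a probability measure $\QQ$ on $\reals^S$. Uniform integrability upgrades these weak limits to $L^1$ limits on the coordinate maps, so the martingale identity passes to $\QQ$ and $(X_q)_{q\in S}$ is a martingale under $\QQ$ with $\EP{\QQ}{(X_q-x)_+}=C(q,x)$.

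Continuity of $C(t,x)$ in $t$ makes $q\mapsto C(q,x)$ continuous on $S$, so Doob's classical upcrossings inequality guarantees that $\QQ$-a.s.\ the trajectory $q\mapsto X_q$ on $S$ admits finite left and right limits at every $t\in\reals_+$. The right-limit modification yields a measure $\PP$ on $(\D,\setsF)$ under which $X$ is a \cadlag\ martingale whose restriction to times in $S$ has law $\QQ$. By right-continuity of paths and weak continuity of $t\mapsto\mu_t$ (also a consequence of continuity of $C$), the identity $\EP{\PP}{(X_t-x)_+}=C(t,x)$ extends to every $t\in\reals_+$, and $X$ is continuous in probability under $\PP$.

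It remains to show $\PP_{n_k}^F\to\PP^F$ for an arbitrary finite $F\subset\reals_+$, not just for $F\subset S$. By tightness, a further sub-subsequence of $\PP_{n_k}^{S\cup F}$ converges weakly to some $\tilde\QQ$ on $\reals^{S\cup F}$; repeating the earlier arguments, $\tilde\QQ$ makes the coordinate process a uniformly integrable martingale with marginals $\mu_t$, and weak continuity of $t\mapsto\mu_t$ together with the martingale property (through the Jensen inequality $\EP{\tilde\QQ}{(X_t-x)_+\mid X_s}\geq(X_s-x)_+$ and the identity of expected call prices) forces $X_t$ at $t\in F$ to be the $L^1$-limit of $X_s$ along $s\in S$ with $s\to t$. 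Hence $\tilde\QQ$ is determined by its restriction to $\reals^S$ and must coincide with $\PP^{S\cup F}$, giving $\PP_{n_k}^F\to\PP^F$. I expect the main technical obstacle to be the clean extraction of uniform integrability from the convergence of call prices, and the careful argument that weak continuity of marginals forces continuity in probability for the $L^1$-martingale $\tilde\QQ$; the tightness and regularization steps are routine.
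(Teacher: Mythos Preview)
Your proposal is correct and follows essentially the same route as the paper: tightness from the call-price convergence, extraction of a limit on a countable dense time set, passage of the martingale property via uniform integrability, Doob regularization to \cadlag\ paths, continuity in probability from continuity of $C$ in $t$, and a sub-subsequence argument on $S\cup F$ for general finite-dimensional convergence. The only cosmetic difference is that the paper invokes Prokhorov directly on the Polish space $\reals^S$ rather than your diagonal-plus-Kolmogorov construction, and it writes out the continuity-in-probability step explicitly (showing $\EP{\QQ}{(X^S_{t+}-x)_+-(X^S_{t-}-x)_+}=0$ forces $X^S_{t+}=X^S_{t-}$), which is exactly the point you flag as the main technical obstacle.
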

\begin{proof}
First, choose any $t\in\reals_+$ and $\epsilon>0$. For every $K>0$,
\begin{equation*}\begin{split}
{\PP_n}\left(|X_t|>K\right) &\le \EP{\PP_n}{(X_t-K+1)_++1+(X_t+K-1)_+-(X_t+K)_+}\\
&\rightarrow C(t,K-1)+1+C(t,1-K)-C(t,-K).
\end{split}\end{equation*}
As this can be made arbitrarily small by making $K$ large, we see that for every $\epsilon>0$ there exists a $K>0$ such that
$\PP_n\left(|X_t|>K\right)<\epsilon$
for every $n$. Letting $S=\{s_1,s_2,\ldots\}$ be a countable dense subset of $\reals_+$ and $\epsilon>0$, this shows that there exists a sequence $K_n>0$ such that
\begin{equation*}
\PP_n\left(|X_{s_n}|>K_n\right)<2^{-n}\epsilon.
\end{equation*}
Letting $A$ be the compact set of all $\omega\in\reals^S$ satisfying $|\omega(s_n)|\le K_n$,
\begin{equation*}
\PP^S_n(\reals^S\setminus A)\le\sum_{n=1}^\infty\PP_n(|X_{s_n}|>K_n)<\sum_{n=1}^\infty 2^{-n}\epsilon=\epsilon.
\end{equation*}
So the sequence $\PP^S_n$ is tight and, by passing to a subsequence if necessary, we may assume that it convergence weakly to a probability measure $\QQ$ on $(\reals^S,\setsF^S)$.

For every $t\in S$ and $x,y\in\reals$, weak convergence gives
\begin{equation*}\begin{split}
\EP{\QQ}{(X^S_t-x)_+-(X^S_t-y)_+}&=\lim_{n\rightarrow\infty}\EP{\PP_n}{(X_t-x)_+-(X_t-y)_+}\\
&=C(t,x)-C(t,y).
\end{split}\end{equation*}
Letting $y$ increase to infinity and using dominated convergence,
\begin{equation}\label{eq:pf:lim of mgales on S:1}
\EP{\QQ}{(X^S_t-x)_+}=C(t,x).
\end{equation}
If $s<t$ are in $S$, $Z:\reals^S\rightarrow\reals$ is $\setsF^S_s$-measurable, continuous and such that $ZX^S_s$ is bounded, and $0\le Z\le 1$ then,
\begin{equation*}\begin{split}
\EP{\QQ}{Z(X^S_s-x)_+} &=\lim_{n\rightarrow\infty}\EP{\PP^S_n}{Z(X^S_s-x)_+}
\le\lim_{n\rightarrow\infty}\EP{\PP^S_n}{Z(X^S_t-x)_+}\\
&\le\lim_{n\rightarrow\infty}\EP{\PP_n^S}{Z\left((X^S_t-x)_+-(X^S_t-y)_+\right)}+C(t,y)\\
&=\EP{\QQ}{Z\left((X^S_t-x)_+-(X^S_t-y)_+\right)}+C(t,y)\\
\end{split}\end{equation*}
Letting $y$ increase to infinity and using dominated convergence shows that $(X^S_s-x)_+$ is a $\QQ$-submartingale. So,
\begin{equation*}\begin{split}
\EP{\QQ}{ZX^S_s}&=\lim_{x\rightarrow-\infty}\EP{\QQ}{Z\left((X^S_s-x)_++x\right)}\\
&\le\lim_{x\rightarrow-\infty}\EP{\QQ}{Z\left((X^S_t-x)_++x\right)}=\EP{\QQ}{ZX^S_t}.
\end{split}\end{equation*}
Therefore, $X^S$ is a $\QQ$-submartingale. Furthermore, as $C\in\cp$, (\ref{eq:pf:lim of mgales on S:1}) shows that $\EP{\QQ}{X^S_t}$ is independent of $t$ and $X^S$ is a $\QQ$-martingale. This allows us to extend $X^S_t$ to all $t\in\reals_+$ using $X^S_t=\EP{\QQ}{X^S_u|\setsF^S_t}$ for any $u\ge t$ in $S$. We now show that $X^S$ is continuous in probability. As it is a martingale, it has almost-sure left and right limits $X^S_{t-}, X^S_{t+}$ for every $t\in\reals_+$ (for $t=0$ set $X^S_{t-}=X^S_0$). Assuming that $0\in S$, taking the difference of the right and left limits of equation (\ref{eq:pf:lim of mgales on S:1}) in $t$ and using the continuity of $C$ gives,
\begin{equation*}\begin{split}
0&=\EP{\QQ}{(X^S_{t+}-x)_+-(X^S_{t-}-x)_+}\\
&= \EP{\QQ}{1_{\{X^S_{t-}>x\}}(X^S_{t+}-X^S_{t-})+1_{\left\{X^S_{t-}>x>X^S_{t+}\textrm{ or }X^S_{t+}>x\ge X^S_{t-}\right\}}|X^S_{t+}-x|}\\
&=\EP{\QQ}{1_{\left\{X^S_{t-}>x>X^S_{t+}\textrm{ or }X^S_{t+}>x\ge X^S_{t-}\right\}}|X^S_{t+}-x|}.
\end{split}\end{equation*}
So, $\QQ(X^S_{t-}>x>X^S_{t+})=\QQ(X^S_{t+}>x\ge X^S_{t-})=0$ for every $x$, showing that $X^S_{t-}=X^S_{t+}$. As $X^S$ is a martingale and right-continuous in probability, it has a \cadlag\ version and, therefore, there is a measure $\PP$ on $(\D,\setsF)$ satisfying $\PP^S=\QQ$. Furthermore, $X$ is a martingale which is continuous in probability under $\PP$. Taking limits of $t\in S$ also shows that $\EP{\PP}{(X_t-x)_+}=C(t,x)$.

It only remains to show that $\PP_n\rightarrow\PP$ in the sense of finite dimensional distributions. We use proof by contradiction, so suppose that this is not the case. Then there would exist a random variable $Z$ of the form (\ref{eqn:rv Z for finite dim}) for a finite subset $F=\{t_1,\ldots,t_m\}$ of $\reals_+$ for which $\EP{\PP_n}{Z}$ does not converge to $\EP{\PP}{Z}$. Passing to a subsequence if necessary, we may suppose that
\begin{equation}\label{eq:pf:lim of mgales on S:2}
\EP{\PP_n}{Z}\ge\EP{\PP}{Z}+\epsilon
\end{equation}
for some $\epsilon>0$ and every $n$. Setting $S^\prime=S\cup F$ the above argument shows that, by passing to a further subsequence, there exists a measure $\PP^\prime$ on $(\D,\setsF)$ such that $\PP_n^{S^\prime}\rightarrow(\PP^\prime)^{S^\prime}$.
In particular, $(\PP^\prime)^{S}=\lim_{n\rightarrow\infty}\PP_n^S=\PP^S$ and, by right-continuity in $t$, it follows that $\PP=\PP^\prime$ and $\PP_n^{S^\prime}\rightarrow\PP^{S^\prime}$ contradicting (\ref{eq:pf:lim of mgales on S:2}).
\end{proof}

Combining Lemma \ref{lemma:lim of mgales on S} with the results of \citep{Lowther2} gives the following, which will be used to construct \acd\ martingale measures with specified marginals by taking limits of measures matching the marginals at finitely many times.

\begin{lemma}\label{lemma:acd conv to acd}
Let $\PP_n$ be a sequence of \acd\ martingale measures on $(\D,\setsF)$ and $C\in\cp$ be such that
$\EP{\PP_n}{(X_t-x)_+}\rightarrow C(t,x)$.
Then, there exists a subsequence $\PP_{n_k}$ converging in the sense of finite-dimensional distributions to an \acd\ martingale measure $\PP$ satisfying $\EP{\PP}{(X_t-x)_+}=C(t,x)$.
\end{lemma}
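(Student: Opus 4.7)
The plan is to combine the previous lemma with an off-the-shelf closure result for the class of almost-continuous diffusions from \citep{Lowther2}. The only work beyond Lemma \ref{lemma:lim of mgales on S} is to upgrade the limit from ``martingale measure continuous in probability'' to ``\acd\ martingale measure''.

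First I would apply Lemma \ref{lemma:lim of mgales on S} to the sequence $(\PP_n)_{n\in\nat}$, which is permitted because every \acd\ martingale measure is in particular a martingale measure and the hypothesis $\EP{\PP_n}{(X_t-x)_+}\rightarrow C(t,x)$ carries over verbatim. This produces a subsequence $\PP_{n_k}$ converging in the sense of finite-dimensional distributions to some measure $\PP$ on $(\D,\setsF)$, under which $X$ is a martingale, continuous in probability, and satisfies $\EP{\PP}{(X_t-x)_+}=C(t,x)$. At this stage everything asserted in the conclusion of the lemma is already in hand, \emph{except} for the \acd\ property of $\PP$.

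Second, I would invoke the appropriate result from \citep{Lowther2} to the effect that the class of \acd\ martingale measures is closed under limits in the sense of finite-dimensional distributions, provided the limit is continuous in probability. Since each $\PP_{n_k}$ is \acd\ and Lemma \ref{lemma:lim of mgales on S} has delivered exactly the mode of convergence together with the continuity-in-probability of $\PP$ that this closure theorem requires, we conclude that $\PP$ is an \acd\ martingale measure. Combined with the marginal identity already obtained, this finishes the proof.

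The main obstacle is really just bookkeeping about the external result: one must verify that the precise hypotheses of the stability theorem in \citep{Lowther2} line up with what Lemma \ref{lemma:lim of mgales on S} actually produces. Lemma \ref{lemma:lim of mgales on S} looks engineered precisely for this match --- the otherwise gratuitous inclusion of ``$X$ is continuous in probability under $\PP$'' in its conclusion is the clue that the closure result needs exactly this hypothesis and nothing more, so no further analytic work should be required beyond quoting the correct statement.
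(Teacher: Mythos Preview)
Your proposal is correct and matches the paper's proof essentially line for line: apply Lemma \ref{lemma:lim of mgales on S} to extract a subsequential limit $\PP$ that is a martingale measure, continuous in probability, with the right marginals, and then invoke the closure result from \citep{Lowther2} (specifically Corollary 1.3 there) to upgrade $\PP$ to an \acd\ martingale measure. Your observation that the ``continuous in probability'' clause in Lemma \ref{lemma:lim of mgales on S} is there precisely to feed this external result is exactly right.
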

\begin{proof}
First, Lemma \ref{lemma:lim of mgales on S} says that there exists a subsequence $\PP_{n_k}$ converging in the sense of finite-dimensional distributions to a martingale measure $\PP$ satisfying (\ref{eqn:unique measure fitting marginals}) under which $X$ is continuous in probability.
However, Corollary 1.3 of \citep{Lowther2} states that under such a limit, $X$ is an almost-continuous diffusion.
\end{proof}

To complete the proof of the existence of the \acd\ martingale measure, it just needs to be shown that it is possible to fit the marginals arbitrarily closely and Lemma \ref{lemma:acd conv to acd} will provide us with the required limit. There are, however, many different ways in which we can go about this. For example, we could construct a diffusion as the solution of a stochastic differential equation to match smooth $C\in\cp$. Alternatively, finite state Markov chains could be used to approximate the marginals by finite distributions.
However, one way to exactly match the marginals at any finite set of times is to use a Skorokhod embedding to time change a Brownian motion, as we describe now. This uses the methods described in \citep{Hobson2}.

Fix any $C\in\cp$ and times $t_0<t_1$. Also, let $\mu_t$ be the associated one dimensional measures, satisfying $\int(y-x)_+\,d\mu_t(y)=C(t,x)$.
Then, define the distribution function $F_1(x)=C\pd{2}(t_1,x+)+1=\mu_{t_1}((-\infty,x])$.
For $u\in(0,1)$ set $\beta(u)=\inf\{x\in\reals:F_1(x)\ge u\}$, and let $g_u:[\beta(u),\infty)\rightarrow\reals$ be
\begin{equation*}
g_u(x)=C(t_1,\beta(u))+\left(x-\beta(u)\right)(u-1).
\end{equation*}
Then, $\alpha(u)\ge\beta(u)$ is chosen to satisfy $g_u(\alpha(u))=C(t_0,\alpha(u))$.
This uniquely defines $\alpha(u)$ when $C(t_1,\beta(u))>C(t_0,\beta(u))$ (see Figure \ref{fig:defining alpha beta}), otherwise we set $\alpha(u)=\beta(u)$. Also, set $F_{0,1}^*(x)=\inf\{u\in(0,1):\alpha(u)>x\}$. This is right-continuous and increasing from $0$ to $1$, so is another distribution function.

If $B$ is a Brownian motion with initial distribution $\mu_{t_0}$ and $S_t=\sup_{s\le t}B_s$ is its maximum process, then a stopping time $\tau$ can be defined by
\begin{equation}\label{eq:skorokhod st}
\tau=\inf\left\{t\in\reals_+:F_1(B_t)\le F_{0,1}^*(S_t)\right\}.
\end{equation}
Then $B^\tau$ is a uniformly integrable martingale and $B_\tau$ has law equal to $\mu_{t_1}$.
See \citep{Hobson2} for details (Proposition 2.2 and Corollary 2.1).

\begin{figure} 
\includegraphics{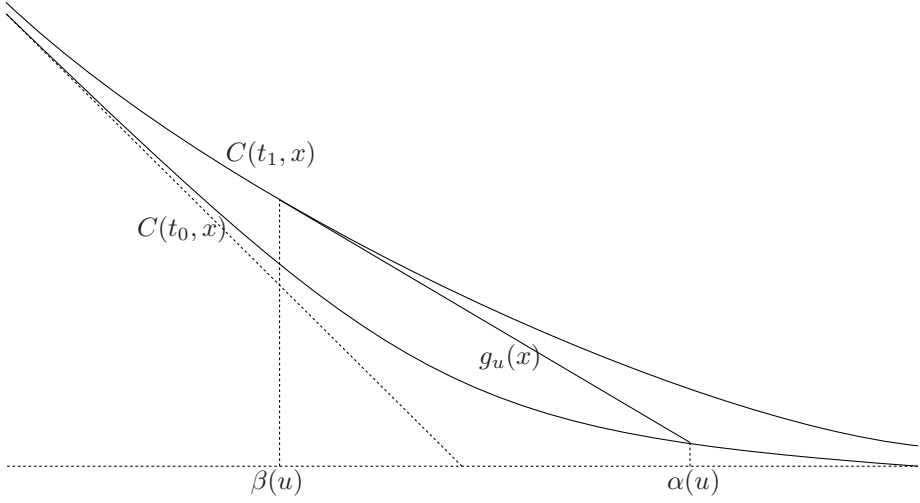} 
\caption{Fitting marginals to $C(t,x)$}\label{fig:defining alpha beta}
\end{figure} 

\begin{lemma}\label{lemma:2 marginals}
Let $C\in\cp$ and $t_0<t_1\in\reals_+$. Then, there exists an \acd\ martingale $X$ such that $\E{(X_t-x)_+}=C(t,x)$ for $t=t_0,t_1$.
\end{lemma}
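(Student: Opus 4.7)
The plan is to produce $X$ via a deterministic time-change of the Brownian motion $B$ from the Hobson Skorokhod embedding described immediately above. Fix a homeomorphism $\phi\colon[t_0,t_1)\to[0,\infty)$ with $\phi(t_0)=0$, and set $X_t=B_0$ for $t\le t_0$, $X_t=B_{\phi(t)\wedge\tau}$ for $t\in[t_0,t_1)$, and $X_t=B_\tau$ for $t\ge t_1$. Since $B^\tau$ is a uniformly integrable martingale with $B_0\sim\mu_{t_0}$ and $B_\tau\sim\mu_{t_1}$, the time-changed process $X$ is a martingale on $\reals_+$ satisfying $\E{(X_t-x)_+}=C(t,x)$ at $t=t_0$ and $t=t_1$, with $X_t\to B_\tau$ a.s.\ as $t\uparrow t_1$ by uniform integrability, so there is no issue at the gluing point.

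Continuity of the Brownian paths together with continuity of $\phi$ makes $X$ almost surely continuous on $\reals_+$, hence \cadlag\ and continuous in probability. Almost-continuity in the sense of Definition \ref{defn:acd}(\ref{defn:ac}) is then immediate, because two independent continuous real-valued paths cannot interchange their order without meeting by the intermediate value theorem.

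The main work is verifying the strong Markov property. The subtlety is that the conditional law of $B_\tau$ given $\setsF^B_s$ depends on the pair $(B_s,S_s)$ of current value and running maximum rather than on $B_s$ alone, so a priori the future of $X$ beyond a stopping time $\sigma$ is not obviously a function of $(\sigma,X_\sigma)$. I expect this step to be the main obstacle and would tackle it by exploiting the monotone structure of Hobson's stopping rule: strictly before $\tau$ the process $B$ always stays above a barrier that is an increasing function of $S$, so once $X$ has attained its current value the running maximum is essentially pinned, and the conditional distribution of $X_{t_1}$ given $\setsF^X_t$ should collapse to a function of $(t,X_t)$ alone.

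If the direct verification proves unwieldy, a viable alternative is to approximate $\mu_{t_0}$ and $\mu_{t_1}$ by finitely supported measures and, for each approximation, construct an explicit finite-state Markov jump martingale whose only jumps are between immediately neighbouring support points; such a process is trivially \acd, and Lemma \ref{lemma:acd conv to acd} then extracts a subsequential limit which is an \acd\ martingale matching $\mu_{t_0}$ and $\mu_{t_1}$ exactly.
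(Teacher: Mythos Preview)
Your primary construction has a genuine gap: the deterministic time change $X_t=B_{\phi(t)\wedge\tau}$ is \emph{not} strong Markov in general. On the event $\{\tau>\phi(t)\}$ the filtration $\setsF^X_t$ coincides with $\setsF^B_{\phi(t)}$, and the conditional law of $B_\tau$ given $\setsF^B_{\phi(t)}$ is a function of the pair $(B_{\phi(t)},S_{\phi(t)})$; the point is that $S_{\phi(t)}$ is \emph{not} determined by $B_{\phi(t)}$. Your heuristic that ``the running maximum is essentially pinned'' is backwards: the barrier condition $F_1(B_s)>F_{0,1}^*(S_s)$ gives a lower bound on $B_s$ in terms of $S_s$, but leaves $S_s$ free to range over an interval for a given value of $B_s$, and different values of $S_s$ genuinely yield different conditional laws for $B_\tau$. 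So the process you build is a continuous martingale matching the two marginals, but it fails the Markov property and hence is not an \acd\ martingale.

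The paper sidesteps this by using a \emph{random} time change rather than a deterministic one: it sets $\tau_t=\inf\{s:B_s\ge\theta(t)\}$ for an increasing bijection $\theta:(t_0,t_1)\to\reals$ and defines $X_t=B_{\tau_t\wedge\tau}$. The effect is that while $\tau_t<\tau$ one has $B_{\tau_t}=S_{\tau_t}=\max(B_0,\theta(t))$, so the running maximum is a deterministic function of $(t,X_t)$ and the Markov property for $(B,S)$ descends to $X$. The price is that $X$ is no longer continuous --- it has a single downward jump when $\theta(t)$ overshoots $S_\tau$ --- so almost-continuity and continuity in probability must be checked separately, which the paper does via the explicit path description $X_t=\max(B_0,\theta(t))$ before the jump and $X_t=B_\tau$ after. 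Your fallback via finite-state approximations and Lemma~\ref{lemma:acd conv to acd} is plausible in spirit, but as written it is only a sketch: you would still need to construct the nearest-neighbour Markov martingales interpolating the finitely supported marginals and produce a limiting $C\in\cp$ to feed into that lemma.
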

\begin{proof}
Let $B$ be a Brownian motion with initial measure $\mu_{t_0}$ and $\tau$ be the stopping time (\ref{eq:skorokhod st}). Let $\theta:(t_0,t_1)\rightarrow\reals$ be any continuous function increasing from $-\infty$ to $\infty$. For example, $\theta(t)=(t_1-t)^{-1}-(t-t_0)^{-1}$. We also set $\theta(t)=-\infty$ for $t\le t_0$ and $\theta(t)=\infty$ for $t\ge t_1$. Defining the stopping times
\begin{equation*}
\tau_t=\inf\left\{s\in\reals_+:B_s\ge\theta(t)\right\},
\end{equation*}
the \acd\ martingale can then be constructed as $X_t=B_{\tau_t\wedge\tau}$.
The distributions of $X_{t_0}$ and $X_{t_1}$ are $\mu_{t_0}$ and $\mu_{t_1}$ (\citep{Hobson2} Proposition 2.2). As $B^\tau$ is uniformly integrable, $X$ will be a martingale. The paths of $X$ are very simple --- if $T$ is the first time at which $\tau_T\ge\tau$ then
\begin{equation*}
X_t=\left\{\begin{array}{ll}
\max(B_0,\theta(t)),&\textrm{if $t<T$},\\
B_\tau,&\textrm{if $t\ge T$.}
\end{array}\right.
\end{equation*}
So $X$ can only have a single jump at time $T$, at which $X_{T-}\ge X_T$.

It only remains to show that $X$ is an almost-continuous diffusion. Continuity in probability is easy. As $X_{t-}\ge X_t$ and $\E{X_{t-}}=\E{X_t}$ by the martingale property, it follows that $X_{t-}=X_{t}$ almost-surely.

The stopping times $\tau_t\wedge\tau$ are hitting times of the strong Markov process $(B_t,S_t)$, where $S_t=\sup_{s\le t}B_s$,  so the time changed process $(X_t,S_{\tau_t\wedge\tau})$ will also be strong Markov. However, $S_{\tau_t\wedge\tau}=X_{t}$ if $X_t\ge\theta(t)$ and $X$ is constant as soon as $X_t<\theta(t)$, so $X$ must be strong Markov.

We finally show that $X$ is almost-continuous, so choose a \cadlag\ process $Y$ independent and identically distributed as $X$. If $Y_s>X_s$, and $Y_t<X_t$ for $s<t$ then we can let $T$ be the first time at which $Y_T< X_T$, at which point we must have $Y_{T-}=X_{T-}=\theta(T)$. In fact, $T$ must be the first time at which $Y_{T-}=X_{T-}$, and so is previsible. So, the martingale property gives $\E{X_{T-}}=\E{X_T}$ and $\E{Y_{T-}}=\E{Y_T}$ and, as $X_{T-}\ge X_T$, $Y_{T-}\ge Y_T$, we have $Y_T=X_T$.
\end{proof}
This is easily extended to match the marginals at a finite set of times.
\begin{corollary}\label{cor:dense acd}
Let $C\in\cp$ and $A\subset\reals_+$ be finite. Then, there exists an \acd\ martingale measure $\PP$ on $(\D,\setsF)$ such that $\EP{\PP}{(X_t-x)_+}=C(t,x)$ for all $t\in A$.
\end{corollary}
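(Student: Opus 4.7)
The plan is to prove the corollary by induction on $n = |A|$. For $n \le 2$ the statement is Lemma \ref{lemma:2 marginals} (for $n = 1$ one applies the lemma with any auxiliary $t_1 > t_0$ and discards the extra marginal). For the inductive step, write $A = \{t_0 < t_1 < \cdots < t_n\}$ and set $A' = A \setminus \{t_n\}$. By the inductive hypothesis there is an \acd\ martingale measure $\PP_1$ on $(\D,\setsF)$ matching $C$ on $A'$, and by Lemma \ref{lemma:2 marginals} there is an \acd\ martingale measure $\PP_2$ matching $C$ on $\{t_{n-1}, t_n\}$. Both measures give $X_{t_{n-1}}$ the common marginal $\mu_{t_{n-1}}$, so they can be pasted at time $t_{n-1}$.

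The pasting proceeds via regular conditional distributions. Since $\D$ is Polish, the $\PP_2$-law of the shifted path $(X_{t_{n-1}+s})_{s\ge 0}$ given $X_{t_{n-1}} = x$ is a Markov kernel $K(x,\cdot)$ defined for $\mu_{t_{n-1}}$-almost every $x$. Let $\PP$ be the measure on $(\D,\setsF)$ under which $(X_t)_{t \le t_{n-1}}$ has the law it has under $\PP_1$ and, conditionally on $\setsF_{t_{n-1}}$, the continuation $(X_{t_{n-1}+s})_{s \ge 0}$ is drawn from $K(X_{t_{n-1}},\cdot)$. The marginal at each $t \in A'$ is then $\mu_t$, inherited from $\PP_1$, while the marginal at $t_n$ is $\int K(x,\{X_{t_n}\in\cdot\})\,d\mu_{t_{n-1}}(x) = \mu_{t_n}$ by construction of $\PP_2$. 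The martingale property is inherited from the two pieces.

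It remains to verify that $\PP$ is an \acd. For the strong Markov property I would split any stopping time $\tau$ on the events $\{\tau < t_{n-1}\}$ and $\{\tau \ge t_{n-1}\}$: on the former, strong Markov of $\PP_1$ up to $t_{n-1}$ composes with the Markov kernel $K$ to write $\E{g(X_{\tau+t}) \mid \setsF_\tau}$ as a function of $(\tau, X_\tau)$; on the latter the same conclusion is immediate from strong Markov of $\PP_2$. For almost-continuity, take independent copies $Y,Z$ with $Y_s < Z_s$ and $Y_t > Z_t$: if $t_{n-1} \notin (s,t)$ the whole crossing sits inside one of the two pieces and that piece's almost-continuity applies; if $s < t_{n-1} < t$, split on the sign of $Y_{t_{n-1}} - Z_{t_{n-1}}$ to locate a zero of $Y - Z$ in $(s, t_{n-1})$ via $\PP_1$, in $(t_{n-1}, t)$ via $\PP_2$, or at $t_{n-1}$ itself. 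The most delicate step I anticipate is precisely this almost-continuity check at the pasting time: the strict inequalities in the definition force the zero to lie in the open interval $(s, t)$ and one must rule out that it is avoided by a straddle across $t_{n-1}$, but the sign analysis above covers every case.
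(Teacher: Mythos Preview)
Your proposal is correct and follows essentially the same inductive concatenation at $t_{n-1}$ as the paper, which simply asserts that the \acd\ martingale property is inherited from the two pieces without spelling out the strong Markov and almost-continuity checks that you supply. The only cosmetic difference is your base case: you invoke Lemma~\ref{lemma:2 marginals} with an auxiliary time for $|A|=1$, whereas the paper uses the constant-in-time process with the given one-dimensional law.
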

\begin{proof}
If $A=\{t\}$ is a single time, then $\PP$ can be taken to be the measure under which $X_t$ is independent of $t$ with the required distribution. For $A=\{t_0<t_1<\cdots<t_n\}$ we use induction on $n$. Suppose that there is an \acd\ martingale measure $\PP_1$ matching the required marginals at times $t_0,t_1,\ldots,t_{n-1}$. By Lemma \ref{lemma:2 marginals} there is an \acd\ martingale measure $\PP_2$ matching the required marginals at times $t_{n-1},t_{n}$. Noting that $X_{t_{n-1}}$ has the same distribution under both $\PP_1$ and $\PP_2$, we can join these two measures together at time $t_{n-1}$ to get $\PP$, which is the unique measure on $(\D,\setsF)$ such that
\begin{equation*}
\EP{\PP}{AB}=\EP{\PP_1}{A\EP{\PP_2}{B|X_{t_{n-1}}}}=\EP{\PP_2}{\EP{\PP_1}{A|X_{t_{n-1}}}B}
\end{equation*}
for all bounded random variables $A,B$ where $A$ is $\setsF_{t_{n-1}}$-measurable and $B$ is $\sigma(X_t:t\ge t_{n-1})$-measurable. That $X$ is an \acd\ martingale under $\PP$ follows from the fact that it satisfies these properties over each of the intervals $[0,t_{n-1}]$ and $[t_{n-1},\infty)$.
\end{proof}

Finally for this section, Corollary \ref{cor:dense acd} is applied to construct the required \acd\ martingale measure.

\begin{lemma}\label{lemma:acd mart exists}
For every $C\in\cp$ there is an \acd\ martingale measure $\PP$ on $(\D,\setsF)$ satisfying $\EP{\PP}{(X_t-x)_+}=C(t,x)$.
\end{lemma}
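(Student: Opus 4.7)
The plan is to construct the required \acd\ martingale measure as a limit of \acd\ martingale measures that match the prescribed marginals on larger and larger finite sets of times. Corollary \ref{cor:dense acd} supplies the approximating measures, and Lemma \ref{lemma:acd conv to acd} lets us pass to the limit.

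First I would fix a countable dense subset $S=\{s_1,s_2,\ldots\}$ of $\reals_+$ (containing $0$) and set $A_n=\{s_1,\ldots,s_n\}$. Corollary \ref{cor:dense acd} then yields an \acd\ martingale measure $\PP_n$ on $(\D,\setsF)$ with $\EP{\PP_n}{(X_t-x)_+}=C(t,x)$ for every $t\in A_n$. The goal is to apply Lemma \ref{lemma:acd conv to acd} to the sequence $(\PP_n)$, so what needs to be checked is that $\EP{\PP_n}{(X_t-x)_+}\to C(t,x)$ at every $(t,x)\in\halfplane$, not merely at times in $S$.

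For $t\in S$ the convergence is trivial, since eventually $t\in A_n$ and the equality holds exactly. For a general $t\in\reals_+$, I would use the fact that $(X_\cdot-x)_+$ is a $\PP_n$-submartingale for every $n$: given any $s,u\in S$ with $s<t<u$ and taking $n$ large enough that $s,u\in A_n$,
\[
C(s,x)=\EP{\PP_n}{(X_s-x)_+}\le\EP{\PP_n}{(X_t-x)_+}\le\EP{\PP_n}{(X_u-x)_+}=C(u,x),
\]
so every subsequential limit of $\EP{\PP_n}{(X_t-x)_+}$ lies in $[C(s,x),C(u,x)]$. Letting $s\uparrow t$ and $u\downarrow t$ through $S$ and invoking the continuity of $C$ in $t$ built into the definition of $\cp$ pins the limit down to $C(t,x)$.

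With this convergence established, Lemma \ref{lemma:acd conv to acd} provides a subsequence $\PP_{n_k}$ converging in the sense of finite-dimensional distributions to an \acd\ martingale measure $\PP$ on $(\D,\setsF)$ satisfying $\EP{\PP}{(X_t-x)_+}=C(t,x)$, which is the desired conclusion. The one step requiring a moment of thought is the off-lattice convergence of the marginal expectations; everything else is a direct appeal to the preceding lemmas.
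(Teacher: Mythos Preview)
Your proposal is correct and follows essentially the same strategy as the paper: build \acd\ martingale measures matching the marginals on finite time grids via Corollary~\ref{cor:dense acd}, verify that $\EP{\PP_n}{(X_t-x)_+}\to C(t,x)$ for all $(t,x)$, and then invoke Lemma~\ref{lemma:acd conv to acd}. The only differences are cosmetic---the paper uses the specific grids $\{k/n:0\le k\le n\}$ and asserts the convergence without comment, whereas you use an enumeration of a dense set and spell out the submartingale sandwich argument; your version is in fact the more careful of the two.
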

\begin{proof}
By Corollary \ref{cor:dense acd}, there is a sequence of \acd\ martingale measures $\PP_n$ satisfying $\EP{\PP_n}{(X_{k/n}-x)_+}=C(k/n,x)$ for $k=0,1,\ldots,n$. Then $\EP{\PP_n}{(X_t-x)_+}\rightarrow C(t,x)$, so the existence of the \acd\ martingale measure $\PP$ follows from Lemma \ref{lemma:acd conv to acd}.
\end{proof}

\section{Uniqueness}
\label{sec:uniqueness}

Using a generalized form of the backward Kolmogorov equation, it was shown in \citep{Lowther4} that continuous and strong Markov martingales are uniquely determined by their marginal distributions. In this section we apply the arguments developed there to \acd\ martingales.
The idea is that if $X$ is a diffusion satisfying the stochastic differential equation (\ref{eq:SDE example}) and it is assumed that $C(t,x)=\E{(X_t-x)_+}$ is twice continuously differentiable, then the backward Kolmogorov equation 
\begin{equation*}
\frac{\partial f}{\partial t}+\frac{1}{2}\sigma(t,x)^2\frac{\partial^2 f}{\partial x^2}=0
\end{equation*}
can be combined with the forward equation (\ref{eq:fwd PDE example}) to obtain the following martingale condition for $f(t,X_t)$,
\begin{equation*}
\frac{\partial f}{\partial t}\frac{\partial^2 C}{\partial x^2}+\frac{\partial C}{\partial t}\frac{\partial^2 f}{\partial x^2}=0.
\end{equation*}
This applies to twice continuously differentiable functions $f$. Multiplying by a smooth $\theta(t,x)$ with compact support in $\ohalfplane$, integration by parts gives
\begin{multline}\label{eq:smooth mart cond}
\dblint\left(\frac{\partial f}{\partial t}\frac{\partial^2 C}{\partial x^2}+\frac{\partial C}{\partial t}\frac{\partial^2 f}{\partial x^2}\right)\theta\,dt\,dx\\
=\dblint\left(
\frac{\partial f}{\partial x}\frac{\partial C}{\partial x}\frac{\partial\theta}{\partial t}
-\frac{\partial\theta}{\partial x}\frac{\partial f}{\partial x}\frac{\partial C}{\partial t}
-\frac{\partial C}{\partial x}\frac{\partial\theta}{\partial x}\frac{\partial f}{\partial t}
\right)\,dt\,dx.
\end{multline}
This expression is defined for differentiable functions, and differentiability in $t$ can be relaxed by replacing terms such as $\int\cdot(\partial f/\partial t)\,dt$ by the Lebesgue-Stieltjes integral $\int\cdot\,d_tf$. The class of functions we consider is as follows.
\begin{definition}\label{def:nicefunc}
Denote by $\nicefunc$ the set of functions $f:\halfplane\rightarrow\reals$ such that
\begin{itemize}
\item $f(t,x)$ is Lipschitz continuous in $x$ and c\`adl\`ag in $t$,
\item for every $K_0<K_1\in\reals$ and $T\in\reals_+$ then
$
\dlimint{K_0}{K_1}{0}{T}\,|d_tf(t,x)|\,dx<\infty,
$
\item the left and right derivatives of $f(t,x)$ with respect to $x$ exist everywhere.
\end{itemize}
Also, let $\nicefuncK$ be the functions $f\in\nicefunc$ with compact support in $\ohalfplane$.
\end{definition}
In particular, if $X$ is a \cadlag\ martingale then $C(t,x)\equiv\E{(X_t-x)_+}$ will be convex in $x$ and \cadlag\ and increasing in $t$, so $C\in\nicefunc$.
Using $f^-(t,x)$ to denote the left limit of $f(t,x)$ in $t$, it was shown in \citep{Lowther4} (Lemma 2.2) that for $f,g\in\nicefunc$ the partial derivatives $\partial f(t,x)/\partial x$ and $\partial f^-(t,x)/\partial x$ exist almost everywhere with respect to the measure $\dblint\cdot\,|d_tg(t,x)|\,dx$. This enables the following definition to be made. Here, where we suppress the arguments of functions inside the integral signs they are understood to be $(t,x)$.
\begin{definition}\label{defn:mufc}
For every $f,g\in\nicefunc$ define the linear map $\mu_{[f,g]}:{\nicefuncK}\rightarrow\reals$
\begin{equation*}
\mu_{[f,g]}\left( \theta \right)=
\dblint f\pd{2}g\pd{2}\,d_t\theta\,dx
-\dblint \theta^-\pd{2} f^-\pd{2}\,d_tg\,dx-\dblint g^-\pd{2}\theta^-\pd{2}\,d_tf\,dx.
\end{equation*}
\end{definition}
Comparing with the right hand side of (\ref{eq:smooth mart cond}), we expect the martingale condition to be $\mu_{[f,C]}=0$, which is indeed the case for continuous processes. However, to include discontinuous processes jump terms need to be added, leading to the following definition.
\begin{definition}\label{def:mutfx}
Let $X$ be a \cadlag\ martingale. Then, for every $f\in\nicefunc$ define the linear map $\tilde\mu^X_f:{\nicefuncK}\rightarrow\reals$
\begin{equation*}
\tilde\mu^X_f\left( \theta \right)= \mu_{[f,C]}(\theta) + \E{\sum_{t>0}J^X_t(\theta,f)}
\end{equation*}
where $C\in\nicefunc$ is defined by $C(t,x)=\E{(X_t-x)_+}$ and
\begin{equation}\label{eqn:def:J}
J^X_t(\theta,f)\equiv\int_{X_{t-}}^{X_t}\left(f(t,x)-f(t,X_t)+(X_t-x)f^-\pd{2}(t,x)\right)\theta^-\pd{2}(t,x)\,dx.
\end{equation}
\end{definition}
The sum of $J^X_t(\theta,f)$ will be integrable (see \citep{Lowther4}), so $\tilde\mu^X_f$ is well defined. We have the following necessary martingale condition.
\begin{lemma}\label{lemma:nec mart cond}
Let $X$ be a \cadlag\ martingale and $f\in\nicefunc$. If $f(t,X_t)$ is a martingale then $\tilde\mu^X_f=0$.
\end{lemma}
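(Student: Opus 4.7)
The plan is to establish $\tilde\mu^X_f(\theta)=0$ for each $\theta\in\nicefuncK$ by transforming the three integrals in $\mu_{[f,C]}(\theta)$ into expectations along the paths of $X$ and then invoking the martingale hypothesis on $f(t,X_t)$. The required probabilistic identities come from the relation $C(t,x)=\E{(X_t-x)_+}$: in particular $C\pd{2}(t,x+)=-\Prob{X_t>x}$, and the Tanaka--Meyer formula applied to $(X_t-x)_+$ yields
\begin{equation*}
(X_t-x)_+-(X_0-x)_+ = \int_0^t 1_{\{X_{s-}>x\}}\,dX_s + \tfrac{1}{2}L^x_t + \sum_{s\le t}R^x_s,
\end{equation*}
where $L^x$ is the local time of the semimartingale $X$ at $x$ and $R^x_s=(X_s-x)_+-(X_{s-}-x)_+-1_{\{X_{s-}>x\}}(X_s-X_{s-})$. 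Taking expectations, $d_t C(t,x)\,dx$ decomposes as a local-time piece plus an expected-jump piece, which couples $C$ back to path functionals of $X$.

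I would first reduce to $\theta\in C^\infty_c(\ohalfplane)$ by a density argument, using that both $\mu_{[f,C]}$ and the map $\theta\mapsto\E{\sum_{t>0}J^X_t(\theta,f)}$ are continuous in $\theta$ under uniform convergence of $\theta,\theta\pd{1},\theta\pd{2}$. Fubini then lets the three double integrals in $\mu_{[f,C]}(\theta)$ be rewritten as expectations of integrals along the path of $X$. Integrating by parts in $t$ against the smooth measure $d_t\theta$ and feeding in the Tanaka expansion for $d_tC$, the continuous-in-$t$ contributions should assemble into an expression of the form
\begin{equation*}
\E{\int_0^\infty H_s\,d\bigl(f(s,X_s)\bigr)}
\end{equation*}
for a bounded predictable integrand $H_s$ built from $\theta\pd{2}^-$ evaluated along the path of $X$; this expectation vanishes because $f(t,X_t)$ is a martingale. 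What remains is a sum of atomic terms, one per jump of $X$, and a direct computation splitting $\{X_{s-}<X_s\}$ from $\{X_{s-}>X_s\}$ should match these against $-J^X_s(\theta,f)$ as given by (\ref{eqn:def:J}).

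The main obstacle is managing the weak regularity allowed by $\nicefunc$: $f$ is only Lipschitz in $x$ and \cadlag\ in $t$, so $f\pd{2}$ and $f^-\pd{2}$ exist only almost everywhere. The interchanges of integration and expectation must be carried out against $|d_tC|\,dx$ and $|d_tf|\,dx$, and it is exactly in this setting that the cited Lemma 2.2 of \citep{Lowther4} guarantees existence of the relevant left and right $x$-derivatives. Distinguishing left and right limits in $t$ is also essential --- predictable integrands must be taken at $t-$ for the martingale property to apply, which accounts for the $^-$ superscripts in Definition \ref{defn:mufc}. The most delicate step is the algebraic identification of the jump residues from $R^x_s$, integrated in $x$ against $\theta\pd{2}^-f^-\pd{2}$, with $J^X_s(\theta,f)$; this is expected to follow from integration by parts in $x$ on each jump interval $[X_{s-}\wedge X_s,\,X_{s-}\vee X_s]$, handled separately on upward and downward jumps.
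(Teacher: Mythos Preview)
The paper does not actually prove this lemma: the entire argument is the one-line citation ``See \citep{Lowther4}, Theorem 3.8.'' So there is no in-paper proof to compare against; the result is imported wholesale from the companion paper.

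Your sketch is in the right spirit for a direct proof and plausibly tracks what \citep{Lowther4} does: translate the three double integrals defining $\mu_{[f,C]}(\theta)$ into expectations of path functionals via $C\pd{2}(t,x+)=-\Prob{X_t>x}$ and the Tanaka--Meyer decomposition of $(X_t-x)_+$, then use the martingale hypothesis on $f(t,X_t)$ to annihilate the main contribution while matching the jump residues to $-\E{\sum_t J^X_t(\theta,f)}$. Your identification of the delicate points (only a.e.\ existence of $f\pd{2}$, the need to track left limits in $t$ for predictability, the piecewise algebra on jump intervals) is accurate.

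One caution: the sentence ``the continuous-in-$t$ contributions should assemble into $\E{\int_0^\infty H_s\,d(f(s,X_s))}$'' is carrying most of the weight. Getting from the $(t,x)$-double integrals to an integral against $d(f(s,X_s))$ is essentially a generalized It\^o formula for $f\in\nicefunc$, which is precisely the nontrivial machinery that \citep{Lowther4} develops; it is not a Fubini-and-rearrange step. Note also that once one grants the later (and likewise cited) Lemma~\ref{lemma:int theta dA}, the present lemma is almost immediate for $\theta\in\nicefuncK\cap\Dom(X)$: if $f(t,X_t)$ is a martingale then taking $M_t=f(t,X_t)$, $A_t=0$ in the decomposition and invoking uniqueness gives $\tilde\mu^X_f(\theta)=\E{\int\theta^-(s,X_{s-})\,dA_s}=0$. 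In \citep{Lowther4} the logical order is presumably the reverse (Theorem 3.8 precedes Lemmas 5.4 and 8.1), but within this paper both are black boxes, so that shortcut is available.
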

See \citep{Lowther4}, Theorem 3.8.
To prove Theorem \ref{thm:unique measure fitting marginals}, the converse of this statement needs to be shown and, for almost-continuous processes, we need to demonstrate that the jump terms $J^X_t(\theta,f)$ can be eliminated to express the martingale condition solely in terms of $\mu_{[f,C]}$.

Fortunately, the jump terms will indeed drop out of the expression for $\tilde\mu^X_f$ as long as $f$ is chosen such that $f^-(t,x)$ is linear in $x$ across each of the connected components of the complement of the support of $X_t$.
To show this, we make use of the \emph{marginal support}, which was defined in \citep{Lowther1} to be the set constructed from the supports of the marginal distributions of $X$ as,
\begin{equation*}
\msupport{X}\equiv\left\{(t,x)\in\halfplane:x\in\support{X_t}\right\}.
\end{equation*}
We now prove the following.
\begin{lemma}\label{lemma:muf for ac}
Let $X$ be an \acd\ martingale and $f\in\nicefunc$ satisfy $f\pd{22}^-(t,x)=0$ for every $t>0$ and $x$ outside the support of $X_t$. Then, $\tilde\mu^X_f=\mu_{[f,C]}$.
\end{lemma}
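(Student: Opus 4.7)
The plan is to show that the difference $\tilde\mu^X_f(\theta)-\mu_{[f,C]}(\theta)=\E{\sum_{t>0}J^X_t(\theta,f)}$ vanishes for every test function $\theta\in\nicefuncK$. Since $J^X_t(\theta,f)$ is supported on the (countably many) jump times of $X$, it suffices to show that $J^X_\tau(\theta,f)=0$ almost surely at each such $\tau$.

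The first main step is a geometric input from the almost-continuity assumption: at every jump time $\tau$ of $X$, the open interval strictly between $X_{\tau-}$ and $X_\tau$ is almost surely disjoint from $\support{X_\tau}$. If this failed, an independent \cadlag\ copy $Y$ of $X$ could with positive probability sit on a support point in this interval near time $\tau$ while $X$ jumps across it, giving $Y$ and $X$ opposite orderings immediately before and after $\tau$ with no intermediate meeting, contradicting Definition \ref{defn:acd}(\ref{defn:ac}). This ``jumps go across gaps'' property is where the almost-continuity hypothesis is really used, and should be available from \citep{Lowther2}.

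Combined with the hypothesis $f\pd{22}^-(\tau,\cdot)=0$ outside $\support{X_\tau}$, this forces $f^-(\tau,\cdot)$ to be affine on the closure of the jump interval. One also needs $f(\tau,\cdot)=f^-(\tau,\cdot)$ at the jump times of $X$: the integrability condition $\int\!\int|d_t f|\,dx<\infty$ over bounded strips implies that the set of $t$ for which $f(t,\cdot)$ and $f^-(t,\cdot)$ differ somewhere is countable (using Lipschitz continuity in $x$ to upgrade almost-everywhere equality to everywhere equality), and continuity in probability of $X$ then rules out jumps of $X$ at any of those fixed times. Writing $c$ for the constant value of $f^-\pd{2}(\tau,\cdot)$ across the jump interval, the integrand
\begin{equation*}
f(\tau,x)-f(\tau,X_\tau)+(X_\tau-x)f^-\pd{2}(\tau,x) = c(x-X_\tau)+(X_\tau-x)c = 0
\end{equation*}
vanishes throughout the closed jump interval, so $J^X_\tau(\theta,f)=0$.

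Summing over the jump times yields $\sum_{t>0}J^X_t(\theta,f)=0$ almost surely, whence $\tilde\mu^X_f=\mu_{[f,C]}$. The real obstacle is the geometric ``jumps across gaps'' claim, as the almost-continuity hypothesis does all of its essential work there; once that is in hand the rest is a direct calculation, with the $f^-$ versus $f$ subtlety handled by the soft countability argument sketched above.
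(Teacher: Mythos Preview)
Your proposal is correct and follows essentially the same route as the paper's proof: invoke the ``jumps cross gaps'' property of almost-continuous processes (the paper cites Corollary 4.8 of \citep{Lowther1} rather than \citep{Lowther2}, but the content is the same) to conclude $f^-_{,22}=0$ across each jump interval, use the countability of times where $f\neq f^-$ together with continuity in probability to replace $f$ by $f^-$ at jump times, and deduce that the integrand in $J^X_t(\theta,f)$ vanishes identically. Your handling of the countability of $\{t:f(t,\cdot)\neq f^-(t,\cdot)\}$ via Lipschitz continuity in $x$ is slightly more explicit than the paper's one-line assertion, but the argument is the same in substance.
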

\begin{proof}
It was shown in Corollary 4.8 of \citep{Lowther1} that for an almost-continuous process, the set
\begin{equation*}
S\equiv\left\{(t,x)\in\ohalfplane:X_{s-}<x<X_s\textrm{ or }X_s<x<X_{s-}\right\}
\end{equation*}
is almost surely disjoint from the marginal support of $X$. Then, the condition of the lemma gives $f\pd{22}^-=0$ on $S$, so
\begin{equation}\label{eq:pf:lemma:muf for ac:1}
f^-(t,x)-f^-(t,X_t)+f^-\pd{2}(t,x)(X_t-x)=0
\end{equation}
for all $(t,x)\in S$, almost surely. Finally, as there are only countably many times for which $f^-\not= f$ and $X$ is continuous in probability, $f(t,x)=f^-(t,x)$ whenever $X_t\not= X_{t-}$ and (\ref{eq:pf:lemma:muf for ac:1}) gives $J^X_t(\theta,f)=0$.
\end{proof}
The following result shows that it will be enough to only consider functions $f\in\nicefunc$ satisfying $f^-\pd{22}=0$ outside the marginal support of $X$.
\begin{lemma}\label{lemma:cond exp is conv fun}
Let $X$ be an \acd\ martingale, $t>0$ and $g:\reals\rightarrow\reals$ be convex with bounded derivative $k\le g^\prime\le K$. Then, there exists an $f:\halfplane\rightarrow\reals$ such that
\begin{equation}\label{eq:convex expectation}
f(s,X_s)=\E{g(X_t)|\setsF_s}
\end{equation}
for every $s<t$, where $f(s,x)$ is convex in $x$, right-continuous and decreasing in $s$, $k\le f\pd{2}\le K$ and $f^-\pd{22}=0$ outside the marginal support of $X$.
\end{lemma}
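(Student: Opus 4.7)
For $s < t$ I would define $f(s,\cdot)$ in two stages: first as a strong-Markov-kernel version of $h(s,x)=\E{g(X_t)\mid X_s=x}$ on $\support{X_s}$, then by affine interpolation on each connected component of the complement (extending with slope in $[k,K]$ on unbounded components). For $s\ge t$ I simply set $f(s,\cdot)=g$. Joint measurability of $h$ comes from the strong-Markov transition kernel $P_{s,t}$ via $h(s,x)=\int g(y)\,P_{s,t}(x,dy)$.

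\textbf{The key step --- convexity on the support.} The decisive point is that $h(s,\cdot)$ is convex on $\support{X_s}$; this is where the almost-continuity assumption is indispensable, as it would fail for a generic strong-Markov martingale. The argument is a monotone-coupling one: for $y<z$ in $\support{X_s}$, almost-continuity prevents two independent copies of $X$ from swapping order without meeting, so one can couple copies $Y,Z$ of $X$ with $Y_s=y$, $Z_s=z$ and $Y_u\le Z_u$ on $[s,t]$. Mixing the pair $(Y,Z)$ with weights $\lambda,1-\lambda$ yields a law at time $t$ lying convex-above $P_{s,t}(\lambda y+(1-\lambda)z,\cdot)$, and Jensen applied to $g$ gives
\[
\lambda h(s,y)+(1-\lambda)h(s,z)\ge h\bigl(s,\lambda y+(1-\lambda)z\bigr)
\]
whenever the midpoint lies in $\support{X_s}$. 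This step leans on coupling machinery already developed in \citep{Lowther1} and \citep{Lowther2}. The same coupling delivers the slope bound: since $g'\in[k,K]$ and $Y_t\le Z_t$ a.s.\ with $\E{Z_t-Y_t}=z-y$, the difference quotients of $h(s,\cdot)$ on the support lie in $[k,K]$.

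\textbf{Completion.} With convexity and the slope bound on $\support{X_s}$, the piecewise-affine extension across the gaps produces a globally convex $f(s,\cdot)$ satisfying $k\le f\pd{2}\le K$. The identity $f(s,X_s)=\E{g(X_t)\mid\setsF_s}$ is preserved because $X_s\in\support{X_s}$ almost surely, and the condition $f^-\pd{22}=0$ outside the marginal support is immediate from affineness on each complementary component. Monotonicity $f(s_1,x)\ge f(s_2,x)$ for $s_1<s_2$ follows from the tower property applied with the already-convex $f(s_2,\cdot)$:
\[
f(s_1,x)=\E{f(s_2,X_{s_2})\mid X_{s_1}=x}\ge f\bigl(s_2,\,\E{X_{s_2}\mid X_{s_1}=x}\bigr)=f(s_2,x)
\]
by Jensen, for $x\in\support{X_{s_1}}$; it propagates to the affine complement from the ordering of the endpoint values. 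Right-continuity in $s$ is finally enforced by taking the right-continuous regularization, consistent with the identity because $\E{g(X_t)\mid\setsF_s}$ is a \cadlag\ martingale in $s$ and $X$ is continuous in probability. The substantive step, and the only one where the \acd\ hypothesis does real work, is the coupling argument for convexity in Stage 2; everything else is bookkeeping around the affine interpolation.
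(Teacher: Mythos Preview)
Your overall architecture matches the paper's: define $f(s,\cdot)$ as the conditional expectation on $\support{X_s}$ (the paper cites \citep{Lowther1} Theorems 1.5--1.7 for convexity, the slope bound $k\le f\pd{2}\le K$, and continuity on the marginal support, rather than re-sketching the coupling), extend affinely across the gaps, establish monotonicity in $s$ via Jensen, and pass to the right-continuous regularization. So the route is the same.

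There is, however, a real gap in your final step. You assert that $f^-\pd{22}=0$ outside the marginal support is ``immediate from affineness on each complementary component,'' but this conflates $f$ with its left limit $f^-$. By construction $f(s,\cdot)$ is affine on the components of $\reals\setminus\support{X_s}$; the object you need is $f^-(s,x)=\lim_{u\uparrow\uparrow s}f(u,x)$, and $f(u,\cdot)$ is only affine on the components of $\reals\setminus\support{X_u}$, which need not coincide with those of $\reals\setminus\support{X_s}$. The paper closes this as follows. On a bounded component $(a,b)$ of $\reals\setminus\support{X_s}$: monotonicity gives $f^-(s,\cdot)\ge f(s,\cdot)$, the latter being the chord from $(a,f(s,a))$ to $(b,f(s,b))$; convexity of $f^-(s,\cdot)$ gives the reverse inequality once you know $f^-(s,a)=f(s,a)$ and $f^-(s,b)=f(s,b)$, and that equality at the endpoints is exactly continuity of $f$ on the marginal support (\citep{Lowther1} Theorem 1.7), which you never invoke. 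On an unbounded component, say $(a,\infty)$, a different argument is needed: the martingale property forces $X_u\le a$ for all $u\le s$, so the extrapolation with slope $K$ was already in force for every $u<s$, and the left limit inherits it. None of this is mere bookkeeping; it is precisely where continuity on $\msupport{X}$ enters, and your sketch skips it.
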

\begin{proof}
First, by theorems 1.5 and 1.6 of \citep{Lowther1}, $f(s,x)$ can be chosen to be convex in $x$ with derivative $k\le f\pd{2}\le K$ and satisfying equation (\ref{eq:convex expectation}) for all $s\le t$.
Theorem 1.7 of \citep{Lowther1} also says that $f(s,x)$ is continuous on the marginal support of $X$ over the range $s\le t$.
We can extend $f(s,x)$ across any bounded connected component of $\reals\setminus\support{X_s}$ by linear interpolation. Also, we can linearly extrapolate w.r.t. $x$ above the supremum of $\support{X_s}$ with gradient $K$ and below the infimum with gradient $k$.
This gives $f\pd{22}(s,x)=0$ outside the support of $X_s$. For simplicity, over $s> t$, fix $f(s,x)\le g(x)$ to be a linear function of $x$ independent of $s$.
Then, for $s<u\le t$, Jensen's inequality gives
\begin{equation*}
f(s,X_s)=\E{f(u,X_u)|\setsF_s}\ge f(t,X_s),
\end{equation*}
from which it follows that $f(s,x)\ge f(u,x)$ for every $x$ in the support of $X_s$. By linear interpolation and extrapolation of $f(s,x)$ outside the support of $X_s$, this inequality is satisfied for all $x$. So, $f(s,x)$ is decreasing in $s$. We now define $\tilde f(s,x)$ to be the limit of $f(u,x)$ as $u$ strictly decreases to $s$.
Then, $\tilde f$ is convex in $x$, right-continuous and decreasing in $s$, and has derivative satisfying $k\le\tilde f\pd{2}\le K$.
By continuity, $\tilde f(s,x)=f(s,x)$ whenever $s<t$ and $x$ is in the support of $X_s$, so $\tilde f$ also satisfies (\ref{eq:convex expectation}).

It only remains to show that $\tilde f^-\pd{22}(s,x)=0$ for $s\le t$ and $x$ outside the support of $X_s$. First, suppose that $x$ is in a bounded connected component $(a,b)$ of $\reals\setminus\support{X_s}$.
As $f(s,x)$ was chosen to be linearly interpolated outside the support of $X_s$,
\begin{equation*}
\tilde f^-(s,x)\ge  f(s,x) = (b-a)^{-1}\left( (b-x)f(s,a)+(x-a)f(s,b)\right).
\end{equation*}
The reverse inequality follows from the convexity of $\tilde f^-$ and $\tilde f^-=f$ at points $(s,a)$ and $(s,b)$ (by continuity). So, $\tilde f\pd{22}^-(s,x)=0$.

Now suppose that $x$ is in a connected component $(a,\infty)$ of $\reals\setminus\support{X_s}$.
As $X$ is a martingale with $X_s\le a$, we must have $X_u\le a$ for all $u\le s$. 
But $f(u,x)$ was chosen to be linearly extrapolated with gradient $K$ over $x>a$,
\begin{equation*}
\tilde f^-(s,x)=\lim_{u\uparrow\uparrow s}f(u,x) = \tilde f^-(s,a) + K(x-a).
\end{equation*}
so we again have $\tilde f\pd{22}^-(s,x)=0$.
Finally, by the same argument, $\tilde f\pd{22}^-(s,x)=0$ for $x$ in a connected component $(-\infty,b)$ of $\reals\setminus\support{X_s}$.
\end{proof}

For a general $f\in\nicefunc$, $f(t,X_t)$ need not be a semimartingale, so it will be necessary to consider the more general class of \emph{Dirichlet processes}. These processes were introduced by Follmer in \citep{Follmer2}, generalized to the non-continuous case in \citep{Stricker1} and \citep{Coquet}, then further extended to the form required here in \citep{Lowther3} and \citep{Lowther4}. So using the notation of \citep{Lowther4}, a process $V$ is said to have zero continuous quadratic variation if its quadratic variation exists and satisfies $[V]_t=\sum_{s\le t}\Delta V_s^2$. For brevity, we say that a process is a \zcqv\ process if it is \cadlag, adapted and has zero continuous quadratic variation. A Dirichlet process is then defined as the sum of a semimartingale and a \zcqv\ process. Quadratic covariations of Dirichlet processes are well defined, and for any semimartingale $X$ and Dirichlet process $Y$ the integral $\int X_-\,dY$ can be defined using integration by parts,
\begin{equation*}
\int_0^tX_{s-}\,dY_s\equiv X_tY_t-X_0Y_0-\int_0^t Y_{s-}\,dX_s-[X,Y]_t.
\end{equation*}

The following result then says that $\tilde\mu^X_f$ gives a measure of the drift of the Dirichlet process $f(t,X_t)$ for any $f\in\nicefunc$. Here, $\Dom(X)$ denotes the functions $\theta\in\nicefunc$ which decompose as $\theta(t,X_t)=M_t+V_t$ for a \cadlag\ martingale $M$ and finite variation process $V$ satisfying
\begin{equation*}
\E{\int_0^t1_{\{|X_{s-}|\le K\}}\,|dV_s|}<\infty
\end{equation*}
for all $t,K>0$.
\begin{lemma}\label{lemma:int theta dA}
Let $X$ be a \cadlag\ martingale and $f\in\nicefunc$. Then there is a unique decomposition
\begin{equation}\label{eq:fMA decomp}
f(t,X_t)=M_t+A_t
\end{equation}
for martingale $M$ and previsible \zcqv\ process $A$ with $A_0 = 0$. Furthermore, for any $\theta\in\nicefuncK$ with $\theta\in\Dom(X)$,
\begin{equation*}
\E{\sup_{t>0}\left|\int_0^t \theta^-(s,X_{s-})\,dA_s\right|}<\infty
\end{equation*}
and,
\begin{equation*}
\E{\int_0^\infty \theta^-(s,X_{s-})\,dA_s}=\tilde\mu^X_f(\theta).
\end{equation*}
\end{lemma}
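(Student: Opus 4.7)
The plan is to first extract the decomposition $f(t,X_t)=M_t+A_t$ from the Dirichlet-process framework of \citep{Lowther4}, and then derive the integral identity by integrating by parts against $\theta(\cdot,X)$ and converting the resulting $(t,x)$-integrals by means of the marginals of $X$.

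For the decomposition, since $f\in\nicefunc$ is Lipschitz in $x$ and \cadlag\ in $t$ with locally integrable $d_tf$, the process $f(t,X_t)$ is a Dirichlet process, and its canonical decomposition into a \cadlag\ local martingale $M$ and a previsible \zcqv\ process $A$ with $A_0=0$ is provided by the calculus of \citep{Lowther4}. Uniqueness reduces to showing that a previsible local martingale $N$ with $N_0=0$ which is also \zcqv\ must vanish: because $N$ is previsible and \cadlag, its jumps occur at previsible stopping times $\tau$, at which $\Delta N_\tau$ is $\setsF_{\tau-}$-measurable; combined with the martingale property $\E{\Delta N_\tau\mid\setsF_{\tau-}}=0$ this forces $\Delta N=0$, so $[N]=\sum\Delta N^2=0$, and the resulting continuous local martingale with zero quadratic variation is constant, hence zero.

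For the identity, use $\theta\in\Dom(X)$ to write $\theta(t,X_t)=M^\theta_t+V^\theta_t$. Since $\theta$ has compact support in $\ohalfplane$ and $[A]^c=0$, integration by parts gives
\begin{equation*}
\int_0^\infty\theta^-(s,X_{s-})\,dA_s=-\int_0^\infty A_{s-}\,d\theta(s,X_s)-\sum_{s>0}\Delta\theta(s,X_s)\,\Delta A_s.
\end{equation*}
Taking expectations, the $\int A_{s-}\,dM^\theta_s$ contribution is a true martingale integral (by the boundedness of the $x$-support of $\theta$ together with the $\Dom(X)$ integrability of $V^\theta$ and the $L^1$ control of $A$ from $A=f(\cdot,X)-M$), so it vanishes. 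Substituting $A=f(\cdot,X)-M$ and discarding further martingale contributions leaves an expectation depending only on $f$, $\theta$, and the jumps of $X$. To convert these expectations into the $(t,x)$ double integrals defining $\mu_{[f,C]}$, use that $C\pd{22}(t,\cdot)$ is the law of $X_t$, together with the integration-by-parts identity $\E{\phi(t,X_t)}=-\int\phi\pd{2}(t,x)C\pd{2}(t,x)\,dx$ for compactly-supported Lipschitz $\phi$, to produce in turn the three terms $\dblint f\pd{2}C\pd{2}\,d_t\theta\,dx$, $\dblint\theta^-\pd{2}f^-\pd{2}\,d_tC\,dx$, and $\dblint C^-\pd{2}\theta^-\pd{2}\,d_tf\,dx$, which combine with the correct signs into $\mu_{[f,C]}(\theta)$. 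The residual jump sum $\sum\Delta\theta\,\Delta A$, after writing $\Delta f(s,X_s)=\int_{X_{s-}}^{X_s}f^-\pd{2}(s,x)\,dx$ plus a previsible correction that integrates to zero, rearranges into precisely $\E{\sum J^X_s(\theta,f)}$ from Definition \ref{def:mutfx}. The $L^1$ bound on $\sup_t|\int_0^t\theta^-(s,X_{s-})\,dA_s|$ then drops out of the same integration-by-parts identity, localized by the compact $x$-support of $\theta$ and controlled using Davis's inequality on the martingale component of $A$ together with the $\Dom(X)$ integrable variation of $V^\theta$.

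The main obstacle will be making the Fubini interchanges rigorous and handling the fact that $A$ is only \zcqv, so $\int\theta^-\,dA$ must be defined through the integration-by-parts identity itself, and that $f^-\pd{2}$ and $C^-\pd{2}$ exist only almost everywhere with respect to the relevant product measures. These are precisely the technicalities that Lemma 2.2 of \citep{Lowther4} and the construction of $\mu_{[f,g]}$ in Definition \ref{defn:mufc} were designed to handle, so in practice the proof will reduce to invoking that machinery while tracking signs with care.
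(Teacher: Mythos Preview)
The paper does not prove this lemma: immediately after the statement it reads ``For the proof of this, see \citep{Lowther4} lemmas 5.4 and 8.1.'' So your proposal is not a competing argument but an attempt to reconstruct the cited one.

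Your outline is in the right spirit---the decomposition is supplied by the Dirichlet calculus of \citep{Lowther4}, and the identity is ultimately obtained by an integration-by-parts manoeuvre together with a conversion of expectations into $(t,x)$-integrals via $C\pd{22}(t,\cdot)=\textrm{law}(X_t)$---and your uniqueness argument for the decomposition (a previsible \zcqv\ local martingale starting at zero vanishes) is correct. But the substance of the lemma lies precisely in the two steps you summarise as ``produce in turn the three terms of $\mu_{[f,C]}(\theta)$'' and ``rearranges into precisely $\E{\sum J^X_s(\theta,f)}$''. For $f$ merely in $\nicefunc$ (not $C^2$) there is no off-the-shelf It\^o expansion of $f(t,X_t)$; one needs a generalized Tanaka-type formula, and the bookkeeping that turns the stochastic pieces into $\dblint f\pd{2}C\pd{2}\,d_t\theta\,dx$, $\dblint\theta^-\pd{2}f^-\pd{2}\,d_tC\,dx$, $\dblint C^-\pd{2}\theta^-\pd{2}\,d_tf\,dx$ with the correct signs, while simultaneously isolating the jump functional $J^X_t(\theta,f)$, is the actual work of \citep{Lowther4}. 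Two further points you pass over: the integrand $\theta^-(s,X_{s-})$ is not in general the left limit of the process $s\mapsto\theta(s,X_s)$, so the integration-by-parts definition of $\int\theta^-\,dA$ does not apply verbatim and one must argue separately that the discrepancy is harmless; and the statement asserts $M$ is a \emph{martingale}, whereas your decomposition only yields a local martingale---the upgrade uses the Lipschitz bound on $f$ and the integrability of $X$. None of this is wrong as a plan, but as written it is a sketch of the strategy rather than a proof.
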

For the proof of this, see \citep{Lowther4} lemmas 5.4 and 8.1.
The proof of the martingale condition $\tilde\mu^X_f=0$ for \acd\ martingales will make use of Lemma \ref{lemma:int theta dA} to show that the process $A$ in decomposition (\ref{eq:fMA decomp}) is zero. This will be done by showing that the time reversed conditional variation, $V^r_X(A)$ below, is zero. First, the reverse filtration $\setsG^X_\cdot$ is
\begin{equation*}
\setsG^X_t=\sigma\left(\{X_s:s\ge t\}\cup\{A\in\setsF:\Prob{A}=0\}\right),
\end{equation*}
and $V^r_X(A)$ is defined by
\begin{equation*}
V^r_X(A)=\sup\E{\sum_{k=1}^nZ_k(A_{t_k}-A_{t_{k-1}})}
\end{equation*}
where the supremum is taken over all sequences $0\le t_0\le t_1\le\cdots\le t_n$ in $\reals_+$ and all $\setsG^X_{t_k}$-measurable random variables $|Z_k|\le 1$.
The following result from \citep{Lowther4} (Lemma 7.2) will be required to bound the variation of $A$.
\begin{lemma}\label{lemma:bdd rev cond var gives int var}
Let $X$ be a c\`adl\`ag real valued process and $A$ be a \zcqv\ process such that $\sup_{t\ge 0}|A_t|$ is integrable and $V^r_X(A)<\infty$.

Suppose furthermore that $A_t-A_s$ is $\setsG^X_s$-measurable for all $t>s$ and that there exists a measurable $u:\halfplane\rightarrow\reals$ such that $\Delta A_t=u(t,X_t)$ for all $t> 0$.
Then, $A$ has integrable variation satisfying
\begin{equation*}
\E{\int\,|dA|}\le V^r_X(A).
\end{equation*}
\end{lemma}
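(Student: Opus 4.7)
The plan is a duality argument. For $A$ c\`adl\`ag, the expected total variation admits the representation
\begin{equation*}
\E{\int_0^\infty|dA|}=\sup_\pi\E{\sum_k|A_{t_k}-A_{t_{k-1}}|},
\end{equation*}
with the supremum over finite partitions $\pi:0=t_0<t_1<\cdots<t_n$ of $\reals_+$, realised as a monotone limit along a countable refining sequence (the integrability of $\sup_t|A_t|$ legitimises truncation at infinity). This reduces the lemma to producing, for each such partition, weights $Z_k\in\setsG^X_{t_k}$ with $|Z_k|\le 1$ satisfying
\begin{equation*}
\E{\sum_kZ_k(A_{t_k}-A_{t_{k-1}})}\ge\E{\sum_k|A_{t_k}-A_{t_{k-1}}|}-\epsilon_\pi,
\end{equation*}
with $\epsilon_\pi$ vanishing along the refining sequence; comparing with the definition of $V^r_X(A)$ will then give the stated bound.

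The main obstacle is a $\sigma$-algebra mismatch. The unconstrained optimum $W_k=\mathrm{sgn}(A_{t_k}-A_{t_{k-1}})$ is, by the hypothesis that $A_t-A_s\in\setsG^X_s$, only $\setsG^X_{t_{k-1}}$-measurable, whereas $V^r_X(A)$ demands $\setsG^X_{t_k}$-measurable weights and $\setsG^X_{t_k}\subseteq\setsG^X_{t_{k-1}}$. A naive projection $Z_k=\E{W_k\mid\setsG^X_{t_k}}$ does not preserve the expectation since $A_{t_k}-A_{t_{k-1}}$ is itself not $\setsG^X_{t_k}$-measurable.

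I would resolve this by decomposing $A=A^c+A^d$ into continuous and pure-jump parts. The jump part $A^d_t=\sum_{s\le t}u(s,X_s)$ has variation $\sum|u(s,X_s)|$, and since $u(s,X_s)\in\setsG^X_s$, by choosing partitions that tightly bracket each of the (a.s.\ countably many) jump times, every jump contribution can be matched term-by-term inside $V^r_X(A)$ using $Z_k=\mathrm{sgn}(u(t_k,X_{t_k}))$. For the continuous component $A^c$, insert auxiliary points $s_k$ with $t_{k-1}<s_k<t_k$ and $t_k-s_k\downarrow 0$: continuity of $A^c$ forces $A^c_{t_k}-A^c_{s_k}\to 0$ in $L^1$, while the remaining sub-increment $A^c_{s_k}-A^c_{t_{k-1}}$ is $\setsG^X_{s_k}$-measurable. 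As $s_k\uparrow t_k$ the reverse filtration $(\setsG^X_t)$ collapses to $\setsG^X_{t_k}$ along the refinement (using the jump hypothesis together with the c\`adl\`ag regularity of $X$), so that $W_k$ restricted to the sub-increment can be realised by a $\setsG^X_{t_k}$-measurable surrogate with vanishing $L^1$ error.

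The hardest step is making the refinement error quantitative and uniform in the partition. Here the zcqv property of $A$ is essential, since it rules out Brownian-type fluctuations in $A^c$ that would be invisible to $V^r_X$ yet contribute infinite variation; combined with $\E{\sup_t|A_t|}<\infty$ it lets the projection error be controlled by vanishing quantities. Summing the jump and continuous estimates and passing to the supremum over partitions then yields $\E{\int|dA|}\le V^r_X(A)$, with integrable variation following from $V^r_X(A)<\infty$.
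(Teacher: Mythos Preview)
The paper does not prove this lemma; it is imported from \cite{Lowther4} (Lemma 7.2) without argument, so there is no in-paper proof to compare against. Assessing your proposal on its own, the duality set-up is correct and you have correctly isolated the real difficulty: the natural weight $W_k=\mathrm{sgn}(A_{t_k}-A_{t_{k-1}})$ lies in $\setsG^X_{t_{k-1}}$, which is \emph{larger} than the required $\setsG^X_{t_k}$.

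Your resolution, however, contains a concrete error and two genuine gaps. In the continuous step you claim that ``$A^c_{s_k}-A^c_{t_{k-1}}$ is $\setsG^X_{s_k}$-measurable''; the hypothesis only gives $\setsG^X_{t_{k-1}}$-measurability (the \emph{lower} endpoint), and since $\setsG^X_{t_{k-1}}\supseteq\setsG^X_{s_k}$ this is the wrong inclusion---sending $s_k\uparrow t_k$ does nothing to the $\sigma$-algebra $\setsG^X_{t_{k-1}}$ your increment actually lives in, and in any case $\bigcap_{s<t_k}\setsG^X_s$ typically strictly contains $\setsG^X_{t_k}$ (it sees $X_{t_k-}$). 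For the jump step, the partition points $t_k$ in the definition of $V^r_X$ are deterministic while the jump times of $A$ are random, so ``tightly bracketing each jump time'' cannot be arranged; the proposed weight $Z_k=\mathrm{sgn}(u(t_k,X_{t_k}))$ only picks up a jump when one occurs exactly at a partition point. Finally, the \zcqv\ hypothesis---without which the conclusion is false---is invoked only rhetorically; neither branch of your argument actually uses it, so the proof as sketched would, if it worked, prove too much. A correct argument must bring the zero-continuous-quadratic-variation condition to bear quantitatively, not just as a plausibility check.
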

In order to apply Lemma \ref{lemma:bdd rev cond var gives int var} it will be necessary to calculate $\Delta A$, which we do by making use of the quasi-left-continuity of $X$.
Recall that a \cadlag\ process $X$ is quasi-left-continuous if $X_{\tau-}=X_\tau$ for every previsible stopping time $\tau$.
\begin{lemma}\label{lemma:qlc}
Every \acd\ martingale is quasi-left-continuous.
\end{lemma}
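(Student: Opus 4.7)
\emph{Plan.} I will argue by contradiction. Let $\tau$ be a previsible stopping time with announcing sequence $\tau_n\uparrow\uparrow\tau$ (i.e.\ $\tau_n<\tau$ increasing to $\tau$), and suppose $\PP(X_{\tau-}\neq X_\tau)>0$. After localizing at $\tau\wedge N$ so that everything is integrable, optional sampling gives $X_{\tau_n}=\E{X_\tau\mid\setsF_{\tau_n}}$, and combining L\'evy's upward convergence with the almost-sure convergence $X_{\tau_n}\to X_{\tau-}$ (from the \cadlag\ property) yields $X_{\tau-}=\E{X_\tau\mid\setsF_{\tau-}}$. So the conditional jump $X_\tau-X_{\tau-}$ given $\setsF_{\tau-}$ has mean zero and is non-degenerate on a positive-probability event; in particular both $\{X_\tau>X_{\tau-}\}$ and $\{X_\tau<X_{\tau-}\}$ must have positive probability.

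Next, enlarge the probability space to carry an independent copy $Z$ of $X$, and apply Definition~\ref{defn:acd}(\ref{defn:ac}) to the pair $(X,Z)$ over deterministic rational intervals. Because the jump set of $Z$ is a countable random set and $\tau$ is independent of $Z$, we have $Z_{\tau-}=Z_\tau$ almost surely. Pick rationals $p<q$ for which $A=\{X_{\tau-}<p<q<X_\tau\}$ has positive probability. On $A\cap\{Z_\tau\in(p,q)\}$, a sufficiently small deterministic neighbourhood $(s,t)\ni\tau$ avoiding the remaining countable set of jumps of both $X$ and $Z$ will satisfy $X_s<p<Z_s=Z_\tau<q<X_t$ and $X_u\neq Z_u$ for every $u\in(s,t)$, witnessing a sign change of $X-Z$ across $\tau$ without meeting. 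Covering the possible values of $\tau$ by countably many rational pairs $(s,t)$ and summing the (zero) probabilities supplied by almost-continuity then forces $\PP(A\cap\{Z_\tau\in(p,q)\})=0$.

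The contradiction is completed by showing $\PP(A\cap\{Z_\tau\in(p,q)\})>0$. Since $Z$ is independent of $X$, conditionally on $X$ the law of $Z_\tau$ is the $\tau$-marginal $\mu_\tau$ of $X$, so the task reduces to showing that $\mu_\tau((p,q))>0$ on a positive-probability subset of $A$. This is the main obstacle: the symmetric-Poisson example of Section~\ref{sec:examples} demonstrates that the claim can fail at totally-inaccessible jump times, so the proof must genuinely use that $\tau$ is previsible. I would proceed by applying the strong Markov property at the announcing times $\tau_n$ to the forward marginals just after $\tau_n$, using weak continuity of $\mu_t$ in $t$ (built into $C\in\cp$) to pass to the limit, and leveraging the fact that on $A$ the path of $X$ itself transits from strictly below $p$ to strictly above $q$ in the vanishing interval $[\tau_n,\tau]$ to force $\mu_\tau$ to charge $(p,q)$ on a positive-probability subset of $A$. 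Quantifying this last step is where the real work lies; once it is in hand, the almost-continuity contradiction obtained in the previous paragraph closes the argument, and the symmetric case $\PP(X_{\tau-}>X_\tau)>0$ is handled identically.
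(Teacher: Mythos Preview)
Your strategy has a gap that cannot be repaired along the lines you sketch. The step you flag as ``where the real work lies'' --- showing that $\mu_\tau((p,q))>0$ on a positive-probability subset of $A=\{X_{\tau-}<p<q<X_\tau\}$ --- is not merely hard; it is \emph{false} for every almost-continuous process. By Corollary~4.8 of \citep{Lowther1} (the same result invoked in the proof of Lemma~\ref{lemma:muf for ac}), the random set $\{(t,x):X_{t-}<x<X_t\text{ or }X_t<x<X_{t-}\}$ is almost surely disjoint from $\msupport{X}$. On $A$, every $x\in(p,q)$ lies strictly between $X_{\tau-}$ and $X_\tau$, hence $x\notin\support{\mu_\tau}$ and therefore $\mu_\tau((p,q))=0$. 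Consequently
\[
\PP\bigl(A\cap\{Z_\tau\in(p,q)\}\bigr)=\E{1_A\,\mu_\tau((p,q))}=0
\]
holds automatically, and no contradiction arises. In effect your covering argument via almost-continuity is a re-derivation of (a special case of) this support result, and the second half of your plan then asks to prove its negation. No amount of previsibility or strong Markov at the announcing times $\tau_n$ will reverse this, since the conclusion $\mu_\tau((p,q))=0$ on $A$ uses only almost-continuity.

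The paper's proof takes a genuinely different route. It uses the strong Markov property not through an independent copy but through the transition functions $f_\alpha$ with $f_\alpha(t,X_t)=\E{g(X_{t+\alpha})\mid\setsF_t}$, establishes that $f_\alpha$ is continuous on $\msupport{X}$, and exploits the fact (Lemmas~4.3 and 4.4 of \citep{Lowther1}) that both $(t,X_t)$ and $(t,X_{t-})$ remain in $\msupport{X}$. This continuity lets one pass from deterministic times to arbitrary stopping times and then, by increasing limits, to the left-limit identity $f_\alpha(\tau,X_{\tau-})=\E{g(X_{\tau+\alpha-})\mid\setsF_{\tau-}}$ for previsible $\tau$. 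Letting $\alpha\downarrow 0$ gives $g(X_{\tau-})=\E{g(X_\tau)\mid\setsF_{\tau-}}$ for every bounded Lipschitz $g$, and a monotone-class argument finishes. The essential ingredient your outline lacks is precisely this continuity of the transition function on the marginal support, which is what bridges the gap between $X_{\tau-}$ and $X_\tau$.
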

\begin{proof}
Let $X$ be an \acd\ martingale, $\alpha>0$, and $g:\reals\rightarrow\reals$ be bounded and Lipschitz continuous with coefficient $K$. Then, there exists a bounded $f_\alpha:\ohalfplane\rightarrow\reals$ such that $f_\alpha(t,x)$ is Lipschitz continuous in $x$ with coefficient $K$ and,
\begin{equation}\label{eq:pf:qlc:1}
f_\alpha(t,X_t)=\E{g(X_{t+\alpha})|\setsF_t}
\end{equation}
for every $t> 0$ (see \citep{Lowther1} Theorem 1.5 or \citep{Lowther2} Lemma 4.3). By linearity, this extends to all stopping times taking only finitely many values in $(0,\infty)$.

If $u<t$ and $h:\reals\rightarrow\reals$ is continuous and bounded, then the continuity of $X$ at $t$ together with bounded convergence gives
\begin{equation*}\begin{split}
\E{h(X_u)f_\alpha(t,X_t)}&=\E{h(X_u)g(X_{t+\alpha})}=\lim_{s\rightarrow t}\E{h(X_u)g(X_{s+\alpha})}\\
&=\lim_{s\rightarrow t}\E{h(X_u)f_\alpha(s,X_s)}
=\lim_{s\rightarrow t}\E{h(X_u)f_\alpha(s,X_t)}.
\end{split}\end{equation*}
If we let $u$ increase to $t$ then $\E{|h(X_u)-h(X_t)|}$ will go to $0$, and we see that the above equality also holds for $u=t$. As $f_\alpha(t,x)$ is uniformly continuous in $x$, this shows that $f_\alpha(t,x)$ is continuous in $t$ for every $x$ in the support of $X_t$. So, $f_\alpha$ is continuous on the marginal support of $X$. By lemmas 4.3 and 4.4 of \citep{Lowther1}, the continuity in probability of $X$ implies that the paths of $(t,X_t)$ and $(t,X_{t-})$ lie in the marginal support of $X$ at all times. So, by taking limits of stopping times which take finitely many values in $(0,\infty)$, equation (\ref{eq:pf:qlc:1}) extends to all stopping times. Similarly, taking increasing limits of stopping times gives
\begin{equation*}
f_\alpha(\tau,X_{\tau-})=\E{g(X_{\tau+\alpha-})|\setsF_{\tau-}}
\end{equation*}
for all previsible stopping times $\tau>0$.
We now note that $f_\alpha(t,X_t)\rightarrow g(X_t)$ in probability as $\alpha\rightarrow 0$. So, by uniform continuity in $x$, $f_\alpha(t,x)\rightarrow g(x)$ on the marginal support of $X$, giving
\begin{equation*}
g(X_{\tau-})=\lim_{\alpha\rightarrow 0}f_\alpha(\tau,X_{\tau-})=\lim_{\alpha\rightarrow 0}\E{g(X_{\tau+\alpha-})|\setsF_{\tau-}}=\E{g(X_\tau)|\setsF_{\tau-}}.
\end{equation*}
So, if $h(x)$ is any bounded measurable function and $u(x,y)=h(x)g(y)$, we can multiply by $h(X_{\tau-})$ and take expectations,
\begin{equation*}
\E{u(X_{\tau-},X_{\tau-})}=\E{u(X_{\tau-},X_\tau)}.
\end{equation*}
Finally, by the monotone class lemma, this extends to all bounded measurable functions $u:\reals^2\rightarrow\reals$ and taking $u(x,y)=1_{\{x=y\}}$ gives $\Prob{X_{\tau-}=X_\tau}=1$.
\end{proof}
Finally, we prove the martingale condition $\mu_{[f,C]}=0$.
\begin{theorem}\label{thm:mart cond}
Let $X$ be an \acd\ martingale and $f\in\nicefunc$ satisfy $f\pd{22}^-(t,x)=0$ for $t>0$ and $x$ outside the support of $X_t$. Then, $f(t,X_t)$ is a martingale if and only if $\mu_{[f,C]}=0$.
\end{theorem}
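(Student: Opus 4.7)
The necessity direction ($\Rightarrow$) is immediate: if $f(t,X_t)$ is a martingale then Lemma \ref{lemma:nec mart cond} yields $\tilde\mu^X_f = 0$, and because $f$ satisfies the hypothesis $f^-\pd{22} = 0$ off the support of $X_t$, Lemma \ref{lemma:muf for ac} identifies $\tilde\mu^X_f$ with $\mu_{[f,C]}$, giving $\mu_{[f,C]} = 0$.

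For the sufficiency direction, assume $\mu_{[f,C]} = 0$ and apply Lemma \ref{lemma:int theta dA} to write uniquely $f(t,X_t) = M_t + A_t$, with $M$ a \cadlag\ martingale and $A$ a previsible \zcqv\ process with $A_0 = 0$; the task is to show $A \equiv 0$. By Lemma \ref{lemma:muf for ac}, $\tilde\mu^X_f = \mu_{[f,C]} = 0$, so Lemma \ref{lemma:int theta dA} gives $\E{\int_0^\infty \theta^-(s, X_{s-})\,dA_s} = 0$ for every $\theta \in \nicefuncK \cap \Dom(X)$. The plan is to test this identity against a rich enough family of $\theta$. Using Lemma \ref{lemma:cond exp is conv fun}, for each bounded Lipschitz convex $g$ and each horizon $T$ I can build a $\theta$ with $\theta(s, X_s)$ itself a martingale --- so trivially $\theta \in \Dom(X)$ --- and $\theta^-\pd{22} = 0$ outside the marginal support of $X$; truncation in $x$ and a smooth cutoff away from $t = 0$ places these inside $\nicefuncK$, while preserving enough freedom in the choice of $g$ and $T$ to separate predictable integrands of the form $h(s, X_{s-})$.

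To turn the vanishing of these integrals into $A \equiv 0$, I would show $A$ has locally integrable total variation via Lemma \ref{lemma:bdd rev cond var gives int var}, applied to the stopped process $A^{\tau_K}$ with $\tau_K = \inf\{t : |X_t| > K\}$. The strong Markov property of the \acd\ martingale $X$, combined with previsibility and the \zcqv\ property of $A$, should force $A_t$ to be a measurable function of $(t, X_t)$, so that $A_t - A_s$ is $\setsG^X_s$-measurable and $\Delta A_t$ has the required form $u(t, X_t)$; quasi-left-continuity (Lemma \ref{lemma:qlc}) supplies $X_\tau = X_{\tau-}$ at every predictable jump time of $A$, which is consistent with this structure. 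Bounding $V^r_X(A^{\tau_K})$ uses the decomposition $A = f(\cdot, X_\cdot) - M$ together with the Lipschitz-in-$x$ control and the $\iint_{K_0}^{K_1}\int_0^T |d_t f|\,dx$ bound in the definition of $\nicefunc$, absorbing reverse-filtration increments of $M$ into bounded quantities on the compact region $\{|X| \le K\}$. Once local integrable variation is established, the vanishing identity extends by a monotone class argument over $\theta$ in the rich family above to $\E{\int_0^\infty H_s\,dA^{\tau_K}_s} = 0$ for every bounded predictable $H$, forcing $A^{\tau_K} \equiv 0$ and hence $A \equiv 0$ on letting $K \to \infty$.

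The principal obstacle will be the second step: verifying the $\setsG^X_s$-measurability of $A_t - A_s$ and the finiteness of $V^r_X(A^{\tau_K})$. Both hinge on showing that $A$ is a Markovian functional of $X$, which is exactly where the strong Markov property enters the argument, and it is also the point at which the support hypothesis on $f^-\pd{22}$ does its real work --- it is precisely that hypothesis which makes Lemma \ref{lemma:muf for ac} available and so translates the cleanly-stated condition $\mu_{[f,C]} = 0$ into the drift-vanishing identity supplied by Lemma \ref{lemma:int theta dA}.
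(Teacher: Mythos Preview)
Your necessity argument is correct and matches the paper. The sufficiency outline has the right opening moves --- the decomposition $f(t,X_t)=M_t+A_t$ from Lemma \ref{lemma:int theta dA}, the reduction $\tilde\mu^X_f=\mu_{[f,C]}$ from Lemma \ref{lemma:muf for ac} --- but the argument for $A\equiv 0$ has a genuine gap.

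The identity you extract from Lemma \ref{lemma:int theta dA}, $\E{\int_0^\infty\theta^-(s,X_{s-})\,dA_s}=0$, tests $dA$ only against integrands of the Markovian form $\theta^-(s,X_{s-})$. Even after establishing integrable variation, a monotone class argument from this family reaches only bounded measurable functions of $(s,X_{s-})$, not all bounded predictable processes, so it cannot force $A$ to have zero variation. What is actually needed to kill $V^r_X$ is $\E{Z(A_t-A_s)}=0$ for $\setsG^X_t$-measurable $Z$, which (via Markov) reduces to $\E{g(X_t)(A_t-A_s)}=0$: this tests increments of $A$ against functions of the \emph{endpoint} $X_t$, not against running integrands. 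Your scheme never produces such an identity. Moreover, your proposed bound on $V^r_X(A^{\tau_K})$ using only Lipschitz and $\iint|d_tf|\,dx$ estimates does not invoke the hypothesis $\mu_{[f,C]}=0$ at all; reverse-filtration increments of the forward martingale $M$ are not ``absorbed'' by any such bound, and there is no reason $V^r_X$ should be finite for generic $f\in\nicefunc$ absent the hypothesis. The claim that $A_t$ is a function of $(t,X_t)$ is also unsupported; the paper needs only $\Delta A_t=f(t,X_t)-f^-(t,X_t)$, obtained from quasi-left-continuity (Lemma \ref{lemma:qlc}), together with the $\setsG^X_s$-measurability of $A_t-A_s$ cited from \citep{Lowther4}.

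The paper closes the gap with an integration-by-parts step that you are missing. Fix a smooth compactly supported $\theta$, set $B_t=\int_0^t\theta(s,X_{s-})\,dA_s$, and for $g$ a difference of bounded convex Lipschitz functions take $h$ from Lemma \ref{lemma:cond exp is conv fun} so that $h(u,X_u)=\E{g(X_T)\mid\setsF_u}$ is a martingale. Then
\[
h(t,X_t)(B_t-B_s)=\int_s^t h^-(u,X_{u-})\theta(u,X_{u-})\,dA_u+\int_s^t A_u\,dh(u,X_u),
\]
the last term is a local martingale, and by Lemma \ref{lemma:int theta dA} the expectation of the first is $\mu_{[f,C]}(1_{[s,t)}h\theta)=0$. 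This gives $\E{g(X_T)(B_t-B_s)}=0$ directly, hence $V^r_X(B)=0$ (not merely finite), and Lemma \ref{lemma:bdd rev cond var gives int var} then yields $B\equiv 0$; varying $\theta$ gives $A\equiv 0$. The role of $h$ here is not to serve as a test function in $\nicefuncK$ (truncating it would destroy the martingale property you rely on) but to multiply against $B_t-B_s$ so that integration by parts converts the endpoint pairing into an integral against $dA$ that Lemma \ref{lemma:int theta dA} can evaluate.
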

\begin{proof}
First, Lemma \ref{lemma:muf for ac} gives $\tilde\mu^X_f=\mu_{[f,C]}$.
Then, if $f(t,X_t)$ is a martingale Lemma \ref{lemma:nec mart cond} gives $\mu_{[f,C]}=0$. Conversely, suppose that $\mu_{[f,C]}=0$ and let $f(t,X_t)=M_t+A_t$ be decomposition (\ref{eq:fMA decomp}). Also, choose any twice continuously differentiable $\theta:\ohalfplane\rightarrow\reals$ with compact support and set $B=\int\theta(s,X_{s-})\,dA_s$, which is a previsible \zcqv\ process (see \citep{Lowther4} Lemma 5.5). We will show that the necessary conditions to apply Lemma \ref{lemma:bdd rev cond var gives int var} to $B$ are satisfied.

First, $\E{\Delta M\tau|\setsF_{\tau-}}=0$ for every previsible stopping time $\tau$, and the quasi-left-continuity of $X$ gives
\begin{equation*}
\Delta A_\tau=\E{f(\tau,X_\tau)-f^-(\tau,X_{\tau-})|\setsF_{\tau-}}=f(\tau,X_{\tau-})-f^-(\tau,X_{\tau-}).
\end{equation*}
By previsible section (e.g., \citep{HeWangYan} Corollary 4.11), it follows that this equality holds simultaneously at all times. In particular
\begin{equation}\label{eq:pf:mart cond:2}
\Delta B_t=\theta(t,X_t)\left( f(t,X_t)-f^-(t,X_t)\right)
\end{equation}
for all times at which $\Delta X_t=0$. As $f=f^-$ at all but countably many times, and $X$ is continuous in probability, the right hand side of (\ref{eq:pf:mart cond:2}) equals $0$ when $\Delta X\not=0$. Also, as $B$ is previsible and $X$ is quasi-left-continuous, the left hand side is also zero when $\Delta X_t\not=0$. So, (\ref{eq:pf:mart cond:2}) holds for all times, and $\Delta B_t$ is a function of $(t,X_t)$, as required by Lemma \ref{lemma:bdd rev cond var gives int var}.

We now show that $V^r_X(B)=0$. To do this, first choose any $T>0$ and let $g:\reals\rightarrow\reals$ be bounded and a difference of convex Lipschitz continuous functions. Lemma \ref{lemma:cond exp is conv fun} says that there is an $h\in\nicefunc$ such that $h\pd{22}^-(t,x)=0$ for $x$ outside the support of $X_t$ and,
\begin{equation*}
h(t,X_t)=\E{g(X_T)|\setsF_t}
\end{equation*}
for $t<T$. Then, integration by parts gives
\begin{equation}\label{eq:pf:mart cond:3}
h(t,X_t)(B_t-B_s) = \int_s^t h^-(u,X_{u-})\theta(u,X_{u-})\,dA+\int_s^tA_{u}\,dh(u,X_{u-})
\end{equation}
(see \citep{Lowther4} equation 5.5 and Lemma 5.6). As $h(t,X_t)$ is a martingale over $t<T$ it is clear that $h\in\Dom(X)$. Also, $\theta$ is twice continuously differentiable and Ito's formula shows that it is in $\Dom(X)$ (see \citep{Lowther4} Lemma 4.3). So, by Lemma 8.2 of \citep{Lowther4}, the product $\theta f$ is also in $\Dom(X)$. So, we can take expectations of (\ref{eq:pf:mart cond:3}) to get
\begin{equation}\label{eq:pf:mart cond:4}
\E{g(X_T)(B_t-B_s)}=\E{h(t,X_t)(B_t-B_s)}=\mu_{[f,C]}(1_{[s,t)}h\theta)=0
\end{equation}
for $s<t<T$. Here, we have used that fact that the final term on the right hand side of (\ref{eq:pf:mart cond:3}) is a local martingale and applied Lemma \ref{lemma:int theta dA}.
Taking limits as $T$ decreases to $t$, (\ref{eq:pf:mart cond:4}) holds for $t=T$.
Then, the monotone class lemma shows that (\ref{eq:pf:mart cond:4}) holds for all bounded and measurable functions $g$. So, if $Z$ is a bounded $\setsG^X_t$-measurable random variable, the Markov property for $X$ gives
\begin{equation*}
\E{Z_t(B_t-B_s)}=\E{\E{Z|X_t}(B_t-B_s)}=0,
\end{equation*}
and, $V^r_X(B)=0$.

Also, $A_t-A_s$ (and therefore $B_t-B_s$) is $\setsG^X_s$-measurable for all $s<t$ (\citep{Lowther4} Corollary 7.5). So Lemma \ref{lemma:bdd rev cond var gives int var} shows that $B$ has zero variation and is therefore constant. Then, as $\theta$ is arbitrary, $A$ must also be constant and $f(t,X_t)=M_t$ is a martingale.
\end{proof}

We end this section with the proofs of  theorems \ref{thm:unique measure fitting marginals} and \ref{thm:marginals to martingales is cts}.

\begin{proof}[Proof of Theorem \ref{thm:unique measure fitting marginals}]
Lemma \ref{lemma:acd mart exists} shows existence of the \acd\ martingale measure.
To prove uniqueness, suppose that we have two such measures $\PP$ and $\QQ$.
Choose any $t>0$ and convex and Lipschitz continuous $g:\reals\rightarrow\reals$. By Lemma \ref{lemma:cond exp is conv fun} there is an $f\in\nicefunc$ such that $f\pd{22}^-=0$ outside the marginal support of $X$ and satisfying
\begin{equation*}
f(s,X_s)=\EP{\PP}{g(X_t)|\setsF_s}
\end{equation*}
for all $s<t$. Letting $s$ increase to $t$ gives $f^-(t,X_t)=g(X_t)$ $\PP$-a.s..
So, we may set $f(s,x)=f^-(t,x)$ for all $s\ge t$, and $f(s,X^t_s)$ will be a $\PP$-martingale.

However, it is clear that the stopped process $X^t$ is also an \acd\ martingale under both $\PP$ and $\QQ$. So, setting $\tilde C(s,x)=C(s\wedge t,x)$ we can apply Theorem \ref{thm:mart cond} to get $\mu_{[f,\tilde C]}=0$.
Another application of Theorem \ref{thm:mart cond} shows that $f(s,X^t_s)$ is also a $\QQ$-martingale and,
\begin{equation*}
f(s,X_s)=\EP{\QQ}{f(t,X_t)|\setsF_s}=\EP{\QQ}{g(X_t)|\setsF_s}.
\end{equation*}
So $X$ has the same pairwise distributions under both $\PP$ and $\QQ$. As they are Markov measures with common initial distribution, this gives $\PP=\QQ$.
\end{proof}

\begin{proof}[Proof of Theorem \ref{thm:marginals to martingales is cts}]
Choose any $C\in\cp$ and sequence $C_n\in\cp$ such that $C_n\rightarrow C$.
We use proof by contradiction to show that $\PP_{C_n}\rightarrow\PP_C$, so suppose that this is false. Then there would exist an $\epsilon>0$ and a random variable $Z$ of the form (\ref{eqn:rv Z for finite dim}) such that
\begin{equation}\label{eq:pf:marginals to martingales is cts:1}
\EP{\PP_{C_n}}{Z}\ge\EP{\PP_C}{Z}+\epsilon
\end{equation}
infinitely often. By passing to a subsequence if necessary, we may suppose that this inequality holds for every $n$.
Then, Lemma \ref{lemma:acd conv to acd} says that by passing to a further subsequence we have $\PP_{C_n}\rightarrow\PP$ for an \acd\ martingale measure $\PP$ satisfying (\ref{eqn:unique measure fitting marginals}). So, the uniqueness part of Theorem \ref{thm:unique measure fitting marginals} gives $\PP=\PP_{C}$, and $\PP_{C_n}\rightarrow\PP_C$ contradicting (\ref{eq:pf:marginals to martingales is cts:1}).
\end{proof}

\section{Extremal Marginals}
\label{sec:extremal}

Note that the space $\cp$ is a convex subset of the space of all real-valued functions on $\halfplane$. For any convex subset of a vector space there is a concept of extremal points --- they are the points which cannot be expressed as a convex combination of other elements in the set.
More precisely, for a convex subset $S$ of a vector space $V$, an element $x\in S$ is said to be extremal if given any $y,z\in S$ and any $\lambda\in(0,1)$ such that $x = \lambda y + (1-\lambda) z$ then $y=z=x$.
We shall call an element of $\cp$ extremal if it is extremal among the convex set of all other elements of $\cp$ with the same values at time $0$.

\begin{definition}
A $C\in\cp$ is \emph{extremal} if given any $C_1,C_2\in\cp$ and $\lambda\in(0,1)$ such that $C_1(0,x)=C_2(0,x)=C(0,x)$ $(\forall  x\in\reals)$ and $C=\lambda C_1 + ( 1 - \lambda) C_2$ then $C_1=C_2=C$.
\end{definition}

It can then be shown that there is a unique martingale measure fitting the marginals given by an extremal element of $\cp$.

\begin{lemma}\label{lemma:extremal give unique martingale}
Let $C\in\cp$ be extremal. Then, there exists a unique martingale measure $\PP$ on $(\D,\setsF)$ satisfying $\EP{\PP}{(X_t-x)_+}=C(t,x)$.
\end{lemma}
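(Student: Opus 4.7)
Existence is immediate from Theorem~\ref{thm:unique measure fitting marginals}, since the ACD martingale measure $\PP_C$ constructed there satisfies $\EP{\PP_C}{(X_t-x)_+}=C(t,x)$. For uniqueness I plan to argue by contradiction: assuming $\PP_1\neq\PP_2$ are two martingale measures on $(\D,\setsF)$ with marginals $C$, I will construct $C_+,C_-\in\cp$ with $C_+\neq C_-$, $C_\pm(0,\cdot)=C(0,\cdot)$, and $C=\frac{1}{2}(C_+ + C_-)$, violating extremality.

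Form the mixture $\tilde\PP=\frac{1}{2}(\PP_1+\PP_2)$, itself a martingale measure with marginals $C$, and let $F=d\PP_1/d\tilde\PP\in[0,2]$. Then $F$ is a bounded $\tilde\PP$-martingale; since $\PP_1$ and $\tilde\PP$ share the initial marginal $\mu_0=C(0,\cdot)$, one has $F_0=1$ $\tilde\PP$-a.s.; and since $X$ is a martingale under both $\PP_1=F\cdot\tilde\PP$ and $\tilde\PP$, a Bayes-rule computation forces $FX$ to be a $\tilde\PP$-martingale. Given any bounded measurable $g:\reals\to\reals$ and $t>0$, consider the perturbation $H=g(X_0)(F_t-1)/M$, with $M$ chosen so that $|H|\le 1/2$. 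This $H$ is $\setsF_t$-measurable and satisfies $\EP{\tilde\PP}{H\mid\setsF_0}=g(X_0)(F_0-1)/M=0$; a direct computation using that $F$ and $FX$ are $\tilde\PP$-martingales shows that $H_s X_s$, with $H_s=\EP{\tilde\PP}{H\mid\setsF_s}$, is also a $\tilde\PP$-martingale. Hence $\QQ^H_\pm:=(1\pm H)\cdot\tilde\PP$ are martingale measures sharing the initial marginal $\mu_0$, so $C^H_\pm(s,x)=\EP{\QQ^H_\pm}{(X_s-x)_+}$ lies in $\cp$ with $C^H_\pm(0,\cdot)=C(0,\cdot)$ and $\frac{1}{2}(C^H_+ + C^H_-)=C$. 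Extremality of $C$ then forces $C^H_+=C^H_-$, which rearranges to
\begin{equation*}
\EP{\tilde\PP}{g(X_0)(F_t-1)(X_s-x)_+}=0
\end{equation*}
for every bounded measurable $g$ and every $t,s\ge 0$ and $x\in\reals$.

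For $s\le t$, the tower property with $\EP{\tilde\PP}{F_t-1\mid\setsF_s}=F_s-1$ reduces the above identity to $\EP{\PP_1}{g(X_0)(X_s-x)_+}=\EP{\tilde\PP}{g(X_0)(X_s-x)_+}$, showing that the joint distribution of $(X_0,X_s)$ coincides under $\PP_1$ and $\tilde\PP$, hence under $\PP_1$ and $\PP_2$. The main obstacle is bootstrapping this two-time agreement into equality of all finite-dimensional joint distributions of $\PP_1$ and $\PP_2$, thereby yielding $\PP_1=\PP_2$. The plan is to iterate the construction with $H=g(X_0,X_{t_1},\ldots,X_{t_k})(F_t-1)/M$ for $t_1<\cdots<t_k\le t$; the agreement of lower-dimensional joints obtained at the previous induction step is precisely what is needed to verify $\EP{\tilde\PP}{H\mid\setsF_0}=0$ and that $H_s X_s$ is a $\tilde\PP$-martingale for the iterated perturbation. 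A monotone class argument then extends the resulting identities to arbitrary bounded measurable test functions of finitely many coordinates, giving $\PP_1=\PP_2$ and the desired contradiction. The technical heart of the proof is verifying that $H_s X_s$ remains a $\tilde\PP$-martingale at each stage of the induction.
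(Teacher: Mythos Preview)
Your base case is sound: with $H=g(X_0)(F_t-1)/M$ the factor $g(X_0)$ is $\setsF_s$-measurable for every $s$, so $H_sX_s=g(X_0)\bigl((F_{s\wedge t}X_s)-X_s\bigr)/M$ is a $\tilde\PP$-martingale and $\QQ^H_\pm$ are genuine martingale measures. The induction, however, breaks exactly at the point you call the ``technical heart''. For $H=g(X_0,X_{t_1},\ldots,X_{t_k})(F_t-1)/M$ with $k\ge 1$, on the interval $[0,t_k]$ the factor $g(\ldots)$ is not yet measurable, and the martingale property of $H_sX_s$ there unwinds to
\[
\EP{\tilde\PP}{(X_{s_2}-X_{s_1})\,g(X_0,\ldots,X_{t_k})\,(F_{t_k}-1)\,1_A}=0\qquad(s_1<s_2\le t_k,\ A\in\setsF_{s_1}),
\]
i.e.\ $\EP{\PP_1}{(X_{s_2}-X_{s_1})g(\ldots)1_A}=\EP{\tilde\PP}{(X_{s_2}-X_{s_1})g(\ldots)1_A}$. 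Because $A$ ranges over all of $\setsF_{s_1}$, this is a statement about full path laws, not about $(k{+}1)$-dimensional marginals; the induction hypothesis does not supply it. Consequently $\QQ^H_\pm$ need not be martingale measures, $C^H_\pm(s,x)=\EP{\QQ^H_\pm}{(X_s-x)_+}$ need not be increasing in $s$, and you cannot invoke extremality.

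The paper avoids this obstruction by a two-step argument that never needs perturbations depending on intermediate times. First it shows that \emph{any} martingale measure $\PP$ with marginals $C$ is Markov: for $t\ge T$ one sets
\[
C_1(t,x)=\EP{\PP}{Z\,\EP{\PP}{(X_t-x)_+\mid X_T}}+\EP{\PP}{(1-Z)(X_t-x)_+}
\]
with $Z$ bounded $\setsF_T$-measurable (and the symmetric $C_2$); conditional Jensen makes $C_1,C_2\in\cp$ automatic, and extremality forces $\EP{\PP}{(X_t-x)_+\mid\setsF_T}=\EP{\PP}{(X_t-x)_+\mid X_T}$. Second, with $Z$ now $\sigma(X_T)$-measurable, a similar mixing of two candidate measures $\PP,\QQ$ yields $\EP{\PP}{Z(X_t-x)_+}=\EP{\QQ}{Z(X_t-x)_+}$, so the two-time laws $(X_T,X_t)$ coincide; together with the Markov property and the common initial law this gives $\PP=\QQ$. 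The point is that building $C_1,C_2$ from conditional expectations makes membership in $\cp$ immediate, whereas building them from the density $F$ forces a martingale verification that your induction hypothesis cannot deliver.
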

\begin{proof}
As Theorem \ref{thm:unique measure fitting marginals} says that such a martingale measure exists, we only need to prove uniqueness. Let us start by showing that any such measure $\PP$ is Markov. Given any $T\ge 0$ and $\setsF_T$-measurable random variable $Z$ with $0\le Z\le 1$, define $C_1,C_2$ by $C_1(t,x)=C_2(t,x)=C(t,x)$ for $t<T$ and,
\begin{align}
&C_1(t,x) = \EP{\PP}{Z\EP{\PP}{(X_t-x)_+|X_T}}+\EP{\PP}{(1-Z)(X_t-x)_+},\label{eq:pf:extremal give unique martingale:1}\\
&C_2(t,x) = \EP{\PP}{(1-Z)\EP{\PP}{(X_t-x)_+|X_T}}+\EP{\PP}{Z(X_t-x)_+}\nonumber
\end{align}
for $t\ge T$. It is easily checked that $C_1,C_2$ are in $\cp$, $C_1(0,x)=C_2(0,x)=C(0,x)$ and $(C_1+C_2)/2=C$. As $C$ is extremal, this implies $C_1=C$.
For $t\ge T$, putting $C_1(t,x)=\EP{\PP}{(X_t-x)_+}$ into (\ref{eq:pf:extremal give unique martingale:1}) gives
\begin{equation*}
\EP{\PP}{Z(X_t-x)_+}=\EP{\PP}{Z\EP{\PP}{(X_t-x)_+|X_T}},
\end{equation*}
so $\EP{\PP}{(X_t-x)_+|\setsF_T}=\EP{\PP}{(X_t-x)_+|X_T}$, and $X$ is indeed Markov.

Let us now suppose that $\PP$, $\QQ$ are two such martingale measures. Choose any $T\ge 0$ and $X_T$-measurable random variable $Z$ with $0\le Z\le 1$, and define $C_1,C_2$ by $C_1(t,x)=C_2(t,x)=C(t,x)$ for $t<T$ and,
\begin{align}
&C_1(t,x)=\EP{\PP}{Z(X_t-x)_+}+\EP{\QQ}{(1-Z)(X_t-x)_+},\label{eq:pf:extremal give unique martingale:2}\\
&C_2(t,x)=\EP{\QQ}{Z(X_t-x)_+}+\EP{\PP}{(1-Z)(X_t-x)_+}\nonumber
\end{align}
for $t\ge T$. Again, it is easily checked that $C_1,C_2$ are in $\cp$ with $C_1(0,x)=C_2(0,x)=C(0,x)$ and $(C_1+C_2)/2=C$. As $C$ is extremal this implies $C_1=C$. For $t\ge T$, putting $C_1(t,x)=\EP{\QQ}{(X_t-x)_+}$ into (\ref{eq:pf:extremal give unique martingale:2}) gives
\begin{equation*}
\EP{\PP}{Z(X_t-x)_+}=\EP{\QQ}{Z(X_t-x)_+}
\end{equation*}
for all $t\ge T$. Therefore, $\PP$ and $\QQ$ are Markov measures for $X$ with the same pairwise and initial distributions, so $\PP=\QQ$.
\end{proof}

Theorem \ref{thm:acd martingale method is unique cts} will follow once it is shown that the extremal elements are dense in $\cp$.
For $C\in\cp$ we will construct extremal elements of $\cp$ which match $C$ any given increasing sequence of times.
We start by showing that there is an extremal element matching $C$ at two times. The method used here corresponds to the \acd\ martingales constructed in Section \ref{sec:existence} (Lemma \ref{lemma:2 marginals}), which was based on the Skorokhod embedding described in \citep{Hobson2}, so let us recall some of the definitions from Section \ref{sec:existence}.

Given a $C\in\cp$ let $\mu_t$ be the corresponding marginal distributions satisfying $\int(y-x)_+\,d\mu_t(y)=C(t,x)$. Fixing $t_0<t_1$, define the distribution function $F_1(x)= C\pd{2}(t_1,x+)+1=\mu_{t_1}((-\infty,x])$ and for $u\in(0,1)$ set $\beta(u)=\inf\{x\in\reals:F_1(x)\ge u\}$. For $x\ge\beta(u)$ set
\begin{equation*}
g_u(x)=C(t_1,\beta(u))+\left(x-\beta(u)\right)(u-1)
\end{equation*}
and define $\alpha(u)\ge\beta(u)$ by $g_u(\alpha(u))=C(t_0,\alpha(u))$. This uniquely defines $\alpha(u)$ whenever $C(t_1,\beta(u))>C(t_0,\beta(u))$ (see Figure \ref{fig:defining alpha beta}), otherwise we take $\beta(u)=\alpha(u)$.

We define $\tilde C:\halfplane\rightarrow\reals$ by setting $\tilde C(t,x)$ equal to $C(t_0,x)$ for $t\le t_0$, $C(t_1,x)$ for $t\ge t_1$ and,
\begin{equation}\label{eqn:def of extremal C 2}
\tilde C(t,x)=\left\{
\begin{array}{ll}
C(t_1,x),&\textrm{if }x\le \beta(u),\\
C(t_0,x),&\textrm{if }x\ge \alpha(u),\\
g_u(x),&\textrm{if }\beta(u)<x<\alpha(u)
\end{array}
\right.
\end{equation}
for $t_0<t<t_1$, where $u$ is set to $u(t)\equiv (t-t_0)/(t_1-t_0)$.
We show that this does indeed give an extremal element of $\cp$.

\begin{lemma}\label{lemma:C is extremal}
Suppose that $C\in\cp$ and $t_0<t_1\in\reals_+$. Then $\tilde C$ defined by (\ref{eqn:def of extremal C 2}) is an extremal element of $\cp$ such that $\tilde C(t,x)$ equals $C(t_0,x)$ for $t\le t_0$ and $C(t_1,x)$ for $t\ge t_1$.
\end{lemma}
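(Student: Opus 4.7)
The plan is to first verify that $\tilde C \in \cp$ with the stated boundary values, and then prove extremality. Membership in $\cp$ reduces to checking convexity of $\tilde C(t,\cdot)$: at the left joint $x = \beta(u)$, the definition $\beta(u) = \inf\{x : F_1(x) \ge u\}$ places the slope $u - 1$ of the linear piece inside the subdifferential of $C(t_1, \cdot)$, while at the right joint $x = \alpha(u)$, the equation $g_u(\alpha(u)) = C(t_0, \alpha(u))$ together with the convexity of $C(t_0, \cdot)$ gives the required slope inequality. Continuity and monotonicity of $\tilde C$ in $t$, and the behavior as $x \to \pm\infty$, follow directly from the explicit construction, and the boundary values at $t = t_0, t_1$ hold by definition.

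For extremality, suppose $\tilde C = \lambda C_1 + (1-\lambda) C_2$ with $C_i \in \cp$ and $C_i(0, \cdot) = C(t_0, \cdot)$. First, peel off the regions where monotonicity forces the answer: using $C_i(t, x) \ge C(t_0, x)$ together with $\lambda C_1 + (1-\lambda) C_2 = \tilde C$ yields $C_i = \tilde C$ on $\{t \le t_0\}$ and on $\{t \in [t_0, t_1], x \ge \alpha(u(t))\}$ (where $\tilde C = C(t_0, \cdot)$); symmetrically, using $C_i(t, x) \le C_i(t_1, x)$ gives $C_i(t, x) = C_i(t_1, x)$ on $\{t \in [t_0, t_1], x \le \beta(u(t))\} \cup \{t \ge t_1\}$ (where $\tilde C = C(t_1, \cdot)$). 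On the affine strip $\beta(u(t)) < x < \alpha(u(t))$, since $\tilde C$ is linear in $x$ and the $C_i$ are convex, both $C_i$ must also be linear there, giving $C_i(t, x) = C(t_0, \alpha(u(t))) + (x - \alpha(u(t))) s_i(t)$ with $\lambda s_1 + (1-\lambda) s_2 = u(t) - 1$.

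The crux is to show $s_i(t) = u(t) - 1$. Setting $\eta_i(t) = s_i(t) - (u(t) - 1)$ gives $\lambda \eta_1 + (1-\lambda) \eta_2 = 0$. Differentiating $g_u(\alpha(u)) = C(t_0, \alpha(u))$ in $u$ yields the identity $\alpha'(u)\left[C\pd{2}(t_0, \alpha(u)+) - (u - 1)\right] = \alpha(u) - \beta(u)$, and enforcing the monotonicity $\partial_t C_i \ge 0$ at the left endpoint $x = \beta(u(t))$ of the affine strip reduces, after using this identity, to
\[
\eta_i'(t) + A(t)\,\eta_i(t) \le 0, \qquad A(t) = \frac{u'(t)\,\alpha'(u(t))}{\alpha(u(t)) - \beta(u(t))} > 0.
\]
Taking the $\lambda,(1-\lambda)$-weighted combination of the two inequalities gives $0 \le 0$, forcing equality in both, hence the linear ODE $\eta_i' + A\eta_i = 0$. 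The initial condition $\eta_i(t_0^+) = 0$ holds by continuity of $C_i$ at $t_0$, since $C_i(t_0, \cdot) = C(t_0, \cdot)$ has slope $-1$ just below the support of $\mu_{t_0}$, matching $u(t_0) - 1 = -1$. Hence $\eta_i \equiv 0$ and $C_i = \tilde C$ on the affine strip. Evaluating at $x = \beta(u(t))$ then pins $C_i(t_1, y) = C(t_1, y)$ for every $y$ in the range of $\beta$, and convexity of $C_i(t_1, \cdot)$ together with the average constraint and the lower bound $C_i(t_1, \cdot) \ge C(t_0, \cdot)$ extends this equality across gaps in the range and to the tails, giving $C_i \equiv \tilde C$.

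The main obstacle is justifying the differential inequality without a priori smoothness of $\eta_i$, $\alpha$, or $\beta$. I expect this to be handled in integrated form: the raw monotonicity $C_i(t, \beta(u(t))) \ge C_i(s, \beta(u(t)))$ for $s < t$ translates, after the same algebraic simplification, into the statement that $t \mapsto \exp\bigl(\int_{t_0}^t A\bigr)\,\eta_i(t)$ is non-increasing; taking the $\lambda,(1-\lambda)$-weighted combination over $i = 1, 2$ then forces this map to be constant for each $i$ individually, and since it starts at $0$ we obtain $\eta_i \equiv 0$ directly.
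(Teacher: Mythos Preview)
Your setup and the ``peeling'' portion of the extremality argument are correct and match the paper: you correctly deduce that $C_i(t,\cdot)=C(t_0,\cdot)$ for $t\le t_0$ and on $\{x\ge\alpha(u(t))\}$, that $C_i(t,\cdot)=C_i(t_1,\cdot)$ for $t\ge t_1$ and on $\{x\le\beta(u(t))\}$, and that each $C_i(t,\cdot)$ is affine on $(\beta(u(t)),\alpha(u(t)))$. The divergence from the paper, and the gap, is in the crux step where you try to pin down the slope $s_i(t)$.

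Your ODE/Gr\"onwall approach relies on objects that simply do not exist in the stated generality. The function $\beta$ jumps at atoms of $\mu_{t_1}$; the function $\alpha$ need not be differentiable because $C(t_0,\cdot)$ is only convex; and the weight $A(t)=u'(t)\alpha'(u(t))/(\alpha(u(t))-\beta(u(t)))$ blows up wherever $\alpha(u)=\beta(u)$, i.e.\ wherever $C(t_0,\cdot)=C(t_1,\cdot)$, so $\int_{t_0}^t A$ is not defined. The initial condition $\eta_i(t_0^+)=0$ is also not justified: the slope $s_i(t)$ is the slope of the affine piece at time $t$, not of $C_i(t_0,\cdot)$, and as $t\downarrow t_0$ the strip may run off to $-\infty$; the limit depends on the unknown $C_i(t_1,\cdot)$ there. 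Your integrated reformulation inherits all of these problems, and the raw monotonicity inequality $C_i(t,\beta(u(t)))\ge C_i(s,\beta(u(t)))$ does not algebraically reduce to monotonicity of $\exp(\int A)\eta_i$ without the same differentiations.

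The paper avoids all of this by working at the terminal time rather than in $t$. Once the affine description
\[
C_i(t,x)=\frac{(\alpha-x)\,C_i(t_1,\beta)+(x-\beta)\,C(t_0,\alpha)}{\alpha-\beta}
\]
is in hand, it fixes an $x$ with $C_1(t_1,x)<C(t_1,x)$, chooses $t$ so that $u(t)=1+C\pd{2}(t_1,x+)$ (hence $\beta(u(t))\le x<\alpha(u(t))$), and uses only convexity and $C_i(t,\cdot)\ge C(t_0,\cdot)$ to get
\[
(C_1)\pd{2}(t_1,x+)\ \ge\ \frac{C(t_0,\alpha)-C_1(t_1,\beta)}{\alpha-\beta}\ \ge\ \frac{C(t_0,\alpha)-C(t_1,\beta)}{\alpha-\beta}\ \ge\ C\pd{2}(t_1,x-).
\]
The symmetric inequality holds where $C_1(t_1,x)>C(t_1,x)$, so $g(x)=(C(t_1,x)-C_1(t_1,x))^2$ has non-positive right derivative; since $g\to 0$ at $\pm\infty$, $g\equiv 0$. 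This pointwise argument needs no regularity of $\alpha$, $\beta$, or $\eta_i$, and is the missing idea in your proposal.
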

\begin{proof}
For $t_0<t<t_1$, $\tilde C(t,x)$ is a convex function of $x$ lying between $C(t_0,x)$ and $C(t_1,x)$ (see Figure \ref{fig:defining alpha beta}). To show that $\tilde C\in\cp$ it just needs to be shown that it is continuous and increasing in $t$.
So, pick any $s<t\in(t_0,t_1)$. Then, $u(s)\le u(t)$ and $\beta(u(s))\le \beta(u(t))$. For any $x\le \beta(u(t))$ this gives
\begin{equation*}
\tilde C(t,x)=C(t_1,x)\ge \tilde C(s,x).
\end{equation*}
Also, if $x> \beta(u(t))$ then
\begin{equation*}
\tilde C(t,x) = \max(g_{u(t)}(x),C(t_0,x)) \ge \max(g_{u(s)}(x),C(t_0,x)) = \tilde C(s,x).
\end{equation*}
So, $\tilde C(t,x)$ is increasing in $t$.

Now choose any $x,y\in\reals$ with $C(t_0,x)<y<C(t_1,x)$, and choose $b<x$ to minimize $u=(C(t_1,b)-y)/(x-b)$. To see that this exists, note that choosing $b$ small enough so that $C(t_1,b)-C(t_0,b)<y-C(t_0,x)$ gives
\begin{equation*}
(C(t_1,b)-y)/(x-b)<(C(t_0,b)-C(t_0,x))/(x-b)\le 1
\end{equation*}
but the limit as $b\rightarrow-\infty$ is $1$. So, by continuity, it must have a minimum.

Choosing $t$ such that $u(t)=1-(C(t_1,x)-y)/(b-x)$ gives $C\pd{2}(t_1,b+)\ge u-1\ge C\pd{2}(t_1,b-)$, and it follows that $\tilde C(t,x)=C(t_1,b)+(x-b)(1-u)=y$.
Therefore, $t\mapsto \tilde C(t,x)$ maps the interval $[t_0,t_1]$ onto $[C(t_0,x),C(t_1,x)]$ and must be continuous.

It only remains to show that $\tilde C$ is extremal, so suppose that
$\tilde C=\lambda C_1 +(1-\lambda)C_2$ for $C_1,C_2\in\cp$, $\lambda\in(0,1)$, and
\begin{equation*}
C_1(0,x)=C_2(0,x)=\tilde C(0,x)=C(t_0,x).
\end{equation*}
In order to show that $C_1=C_2=\tilde C$ we shall make repeated use of the simple fact that if a non-trivial convex combination of two increasing functions is constant, then those functions must also be constant.
In particular, $\tilde C(t,x)=\tilde C(t_1,x)$ for all $t\ge t_1$ so we must also have $C_i(t,x)=C_i(t_1,x)$ for $i=1,2$ and $t\ge t_1$. Similarly, $C_i(t,x)=C_i(0,x)=C(t_0,x)$ for $t\le t_0$.

Now choose any $t\in(t_0,t_1)$ and set $\beta=\beta(u(t))$, $\alpha=\alpha(u(t))$. Then for $x\le\beta$, the definition of $\tilde C$ gives $\tilde C(t,x)=\tilde C(t_1,x)$, so it is also true that $C_i(t,x)=C_i(t_1,x)$ for $i=1,2$.

Also, if $x\ge \alpha$ then $\tilde C(t,x)=\tilde C(t_0,x)$. Therefore $C_i(t,x)=C_i(t_0,x)=C(t_0,x)$.
Furthermore, for all $x$ in $(\beta,\alpha)$
\begin{equation*}
\lambda (C_1)\pd{2}(t,x+)+(1-\lambda)(C_2)\pd{2}(t,x+)=\tilde C\pd{2}(t,x+)=u(t)-1.
\end{equation*}
As $(C_i)\pd{2}(t,x+)$ are increasing functions of $x$, this shows that they are constant as $x$ runs through this interval. Therefore, $C_i(t,x)$ are linear functions of $x$ over $(\beta,\alpha)$.
So, we have shown that
\begin{equation*}
C_i(t,x)=\left\{
\begin{array}{ll}
C_i(t_1,x),&\textrm{if }x\le \beta,\\
C(t_0,x),&\textrm{if }x\ge \alpha,\\
\left((\alpha-x)C_i(t_1,\beta)+(x-\beta)C(t_0,\alpha)\right)/(\alpha-\beta),&\textrm{if }\beta<x<\alpha.
\end{array}
\right.
\end{equation*}
Now suppose that there exists an $x\in\reals$ such that $C_1(t_1,x)<C(t_1,x)$. Choose $t$ such that $u(t)=C\pd{2}(t_1,x+)+1$ and set $\beta=\beta(u(t))$, $\alpha=\alpha(u(t))$. It follows that $\beta\le x<\alpha$, so the fact that $C_1(t,x)$ and $C(t,x)$ are convex in $x$ and increasing in $t$ gives
\begin{equation*}\begin{split}
(C_1)\pd{2}(t_1,x+) &\ge \left( C(t_0,\alpha)-C_1(t_1,\beta)\right)/(\alpha-\beta)\\
&\ge \left(C(t_0,\alpha)-C(t_1,\beta)\right)/(\alpha-\beta)\\
&\ge C\pd{2}(t_1,x-).
\end{split}\end{equation*}
Taking the right hand limits in $x$ gives $(C_1)\pd{2}(t_1,x+)\ge C\pd{2}(t_1,x+)$.

Similarly, if $C_1(t_1,x)>C(t_1,x)$ then $C_2(t_1,x)<C(t_1,x)$ and the above argument gives $(C_2)\pd{2}(t_1,x+)\ge C\pd{2}(t_1,x+)$. So $(C_1)\pd{2}(t_1,x+)\le C\pd{2}(t_1,x+)$.

This shows that the function $g(x)=\left(C(t_1,x)-C_1(t_1,x)\right)^2$ has a non-positive derivative everywhere and must be decreasing, so the limit $g(x)\rightarrow 0$ as $x\rightarrow \pm\infty$ implies that $g$ is identically $0$. So $C_1(t_1,x)=C(t_1,x)$, from which it follows that $C_1=C$.
\end{proof}

The previous lemma allows us to construct an extremal element of $\cp$ matching $C$ at an increasing sequence of times.

\begin{corollary}\label{cor:extremal exists at sequence}
Suppose that $C\in\cp$ and $t_0<t_1<\cdots\uparrow\infty$ are in $\reals_+$.
Then there is an extremal $\tilde C\in\cp$ such that $\tilde C(t_k,x)=C(t_k,x)$ for each $k$.
\end{corollary}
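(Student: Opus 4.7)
My plan is to define $\tilde C$ piecewise using the two-time construction of Lemma \ref{lemma:C is extremal}, and then verify extremality by induction on the index $k$. For the construction, I would set $\tilde C(t,x)=C(t_0,x)$ on $[0,t_0]\times\reals$, and on each slab $[t_k,t_{k+1}]\times\reals$ let $\tilde C$ be given by the formula (\ref{eqn:def of extremal C 2}) applied to $C$ with endpoints $t_k<t_{k+1}$. By Lemma \ref{lemma:C is extremal} each piece lies in $\cp$ with boundary values $C(t_k,\cdot)$ and $C(t_{k+1},\cdot)$, so the pieces agree at every $t_k$, and the resulting global $\tilde C$ is convex in $x$, continuous and increasing in $t$, with the $x\to\pm\infty$ behaviour inherited from $C$. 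Hence $\tilde C\in\cp$ and $\tilde C(t_k,\cdot)=C(t_k,\cdot)$ for every $k$.

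For extremality, suppose $\tilde C=\lambda D_1+(1-\lambda)D_2$ with $D_i\in\cp$, $\lambda\in(0,1)$, and $D_i(0,\cdot)=\tilde C(0,\cdot)=C(t_0,\cdot)$. I would prove by induction on $k$ that $D_i=\tilde C$ on $[0,t_k]\times\reals$. For the base case $k=0$, on $[0,t_0]$ the combination $\lambda D_1+(1-\lambda)D_2$ is constantly $C(t_0,\cdot)$ while each $D_i$ is increasing in $t$ with $D_i(0,\cdot)=C(t_0,\cdot)$, forcing both to be constant and equal to $C(t_0,\cdot)$. For the inductive step, assume $D_i\equiv\tilde C$ on $[0,t_k]\times\reals$, and let $\tilde C^{(k)}\in\cp$ be the extremal element given by Lemma \ref{lemma:C is extremal} applied to $C$ at times $t_k<t_{k+1}$; by construction $\tilde C^{(k)}$ equals $C(t_k,\cdot)$ for $t\le t_k$, agrees with $\tilde C$ on $[t_k,t_{k+1}]$, and equals $C(t_{k+1},\cdot)$ for $t\ge t_{k+1}$. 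Now define
\begin{equation*}
E_i(t,x)=\begin{cases} C(t_k,x), & t\le t_k,\\ D_i(t,x), & t_k\le t\le t_{k+1},\\ D_i(t_{k+1},x), & t\ge t_{k+1}.\end{cases}
\end{equation*}
Continuity of $E_i$ at $t_k$ follows from the inductive hypothesis $D_i(t_k,\cdot)=C(t_k,\cdot)$, and one checks routinely that $E_i\in\cp$ and $E_i(0,\cdot)=C(t_k,\cdot)=\tilde C^{(k)}(0,\cdot)$. A region-by-region check using $\lambda D_1+(1-\lambda)D_2=\tilde C$ shows $\lambda E_1+(1-\lambda)E_2=\tilde C^{(k)}$, so the extremality of $\tilde C^{(k)}$ forces $E_1=E_2=\tilde C^{(k)}$. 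In particular $D_i=\tilde C$ on $[t_k,t_{k+1}]\times\reals$ and $D_i(t_{k+1},\cdot)=C(t_{k+1},\cdot)$, completing the induction.

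Since $t_k\uparrow\infty$, the induction gives $D_1=D_2=\tilde C$ on all of $\halfplane$, establishing extremality. The technical heart of the argument is the inductive step: I cannot apply Lemma \ref{lemma:C is extremal} directly to the local behaviour of $D_i$ on $[t_k,t_{k+1}]$ because its extremality statement is phrased globally on $\halfplane$, so I must package the restrictions of $D_i$ into full $\cp$ elements $E_i$ whose convex combination reproduces $\tilde C^{(k)}$ everywhere; this is exactly why $E_i$ is frozen at $C(t_k,\cdot)$ before $t_k$ and at $D_i(t_{k+1},\cdot)$ after $t_{k+1}$.
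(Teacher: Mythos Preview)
Your proposal is correct and follows essentially the same approach as the paper. The paper streamlines the inductive step by assuming $t_0=0$ (so the base case is trivial) and by writing your $E_i$ as the composition $D_i(\theta(t),x)$ with the time change $\theta(t)=(t\wedge t_{k+1})\vee t_k$, but the packaging of the restricted $D_i$ into global $\cp$ elements and the appeal to the two-time extremality lemma are identical.
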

\begin{proof}
Without loss of generality, we assume that $t_0=0$. By Lemma \ref{lemma:C is extremal} there exists extremal $\tilde C_k\in\cp$ such that $\tilde C_k(t,x)$ equals $C(t_{k-1},x)$ for $t\le t_{k-1}$ and equals $C(t_k,x)$ for $t\ge t_k$ ($k=1,2,\ldots$). Define $\tilde C$ by $\tilde C(t,x) =\tilde C_k(t,x)$ for $t_{k-1}\le t< t_k$.

It is clear that $\tilde C\in\cp$. It just needs to be shown that it is extremal. So suppose that
\begin{equation}\label{eq:pf:extremal exists at sequence:1}
\tilde C=\lambda C_1+(1-\lambda)C_2
\end{equation}
for $\lambda\in(0,1)$, $C_1,C_2\in\cp$, and $C_1(0,x)=C_2(0,x)=C(0,x)$.
We use induction on $k$ to show that $C_1(t,x)=C_2(t,x)=C(t,x)$ for $t\le t_k$. For $k=0$ this is a required condition, so suppose that $k\ge 1$ and that this holds for all $t\le t_{k-1}$.
If $\theta(t)\equiv(t\wedge t_k)\vee t_{k-1}$ then $\tilde C(\theta(t),x)=\tilde C_k(t,x)$ is extremal, and (\ref{eq:pf:extremal exists at sequence:1}) gives $\tilde C_k(t,x)=C_1(t,x)=C_2(t,x)$ for $t_{k-1}\le t\le t_k$.

So, by induction, $\tilde C=C_1=C_2$.
\end{proof}

Finally we complete the proof of Theorem \ref{thm:acd martingale method is unique cts}, showing that $C\mapsto\PP_C$ given by Theorem \ref{thm:unique measure fitting marginals} is the unique continuous map to the martingale measures and matching all possible sets of marginals.

\begin{proof}[Proof of Theorem \ref{thm:acd martingale method is unique cts}]
Choose any $C\in S$ and $Z$ be a random variable of the form (\ref{eqn:rv Z for finite dim}).
We just need to show that $\EP{\QQ_C}{Z}=\EP{\PP_C}{Z}$.

First, Corollary \ref{cor:extremal exists at sequence} gives a sequence $(C_n)_{n\in\nat}$ of extremal elements of $\cp$ such that $C_n(k/n,x)=C(k/n,x)$ for all $k\in\nat$ and, consequently, $C_n\rightarrow C$.

Then, as $S$ is dense in $\cp$, there is a sequence $(C_{n,m})_{m\in\nat}$ such that $d(C_{n,m},C_n)\rightarrow 0$ as $m\rightarrow\infty$, where $d$ is the metric on $\cp$ given by (\ref{eqn:cp metric}).

Now, fixing any $n\in\nat$, Lemma \ref{lemma:lim of mgales on S} says that, by passing to a subsequence if necessary, there exists a martingale measure $\PP$ satisfying equation (\ref{eqn:unique measure fitting marginals}) and such that $\QQ_{C_{n,m}}\rightarrow\PP$ in the sense of finite-dimensional distributions as $m\rightarrow\infty$.
However $C_n$ is extremal so, by Lemma \ref{lemma:extremal give unique martingale}, $\PP=\PP_{C_n}$.
Therefore, for every $n$ we can choose an $m_n\in\nat$ such that
\begin{equation*}
d(C_{n,m_n},C_n)<2^{-n},\ \left|\EP{\QQ_{C_{n,m_n}}}{Z}-\EP{\PP_{C_n}}{Z}\right|<2^{-n}.
\end{equation*}
In particular, $d(C,C_{n,m_n})\le d(C,C_n) + 2^{-n}$ so the continuity of $C\mapsto \QQ_C$ shows that $\QQ_{C_{n,m_n}}$ tends to $\QQ_C$ in the sense of finite dimensional distributions as $n$ goes to infinity. Similarly, $\PP_{C_n}$ tends to $\PP_C$ giving,
\begin{equation*}
\left|\EP{\PP_C}{Z}-\EP{\QQ_C}{Z}\right|
= \lim_{n\rightarrow\infty}\left|\EP{\PP_{C_n}}{Z}-\EP{\QQ_{C_{n.m_n}}}{Z}\right|\le\lim_{n\rightarrow\infty}2^{-n}=0.\qedhere
\end{equation*}
\end{proof}

\bibliography{martingales.bbl}
\bibliographystyle{plain}

\end{document}